 \numberwithin{equation}{section}
\theoremstyle{plain}
\newtheorem{thm}{Theorem}[section]
\newtheorem{cor}[thm]{Corollary}
\newtheorem{lem}[thm]{Lemma}
\newtheorem{prop}[thm]{Proposition}
\theoremstyle{definition}
\newtheorem{defn}[thm]{Definition}
\theoremstyle{remark}
\newtheorem{rem}[thm]{Remark}
\newcommand{\N}{\mathbb{N}}
\newcommand{\R}{\mathbb{R}}
\newcommand{\Z}{\mathbb{Z}} 
\newcommand{\E}{\mathbb{E}}
\newcommand{\bp}{\begin{proof}[\ensuremath{\mathbf{Proof}}]}
\newcommand{\bs}{\begin{proof}[\ensuremath{\mathbf{Solution}}]}
\newcommand{\ep}{\end{proof}}
\newcommand{\be}{\begin{equation}}
\newcommand{\ee}{\end{equation}}
\newcommand{\id}{\text{id}_{\R}}
\begin{document}

\title{Lagrangian coordinates for the sticky particle system}

\author{Ryan Hynd}

\maketitle

\begin{abstract}
The sticky particle system is a system of partial differential equations which assert 
the conservation of mass and momentum of a collection of particles that interact only via inelastic collisions. These equations arise in Zel'dovich's theory for the formation of large scale structures in the universe. We will show that this system of equations has a solution in one spatial dimension for given initial conditions by generating a trajectory mapping in Lagrangian coordinates.
\end{abstract}

\section{Introduction}
In this paper, we will study the {\it sticky particle system} (SPS) in one spatial dimension
\be\label{SPS}
\begin{cases}
\hspace{.27in}\partial_t\rho +\partial_x(\rho v)=0\\
\partial_t(\rho v) +\partial_x(\rho v^2)=0.
\end{cases}
\ee
These equations hold in $\R\times (0,\infty)$ and are typically supplemented with given initial conditions 
\be\label{Init}
\rho|_{t=0}=\rho_0\quad \text{and}\quad v|_{t=0}=v_0.
\ee
The first equation listed in \eqref{SPS} expresses the conservation of mass and the second expresses the conservation of momentum. The unknowns are a pair $\rho$ and $v$ which represent the respective mass density and velocity of a collection of particles that move along the real line and interact via inelastic collisions. Likewise, $\rho_0$ is the associated initial mass distribution and $v_0$ is the corresponding initial velocity. 

\begin{figure}[h]
\centering
 \includegraphics[width=.7\textwidth]{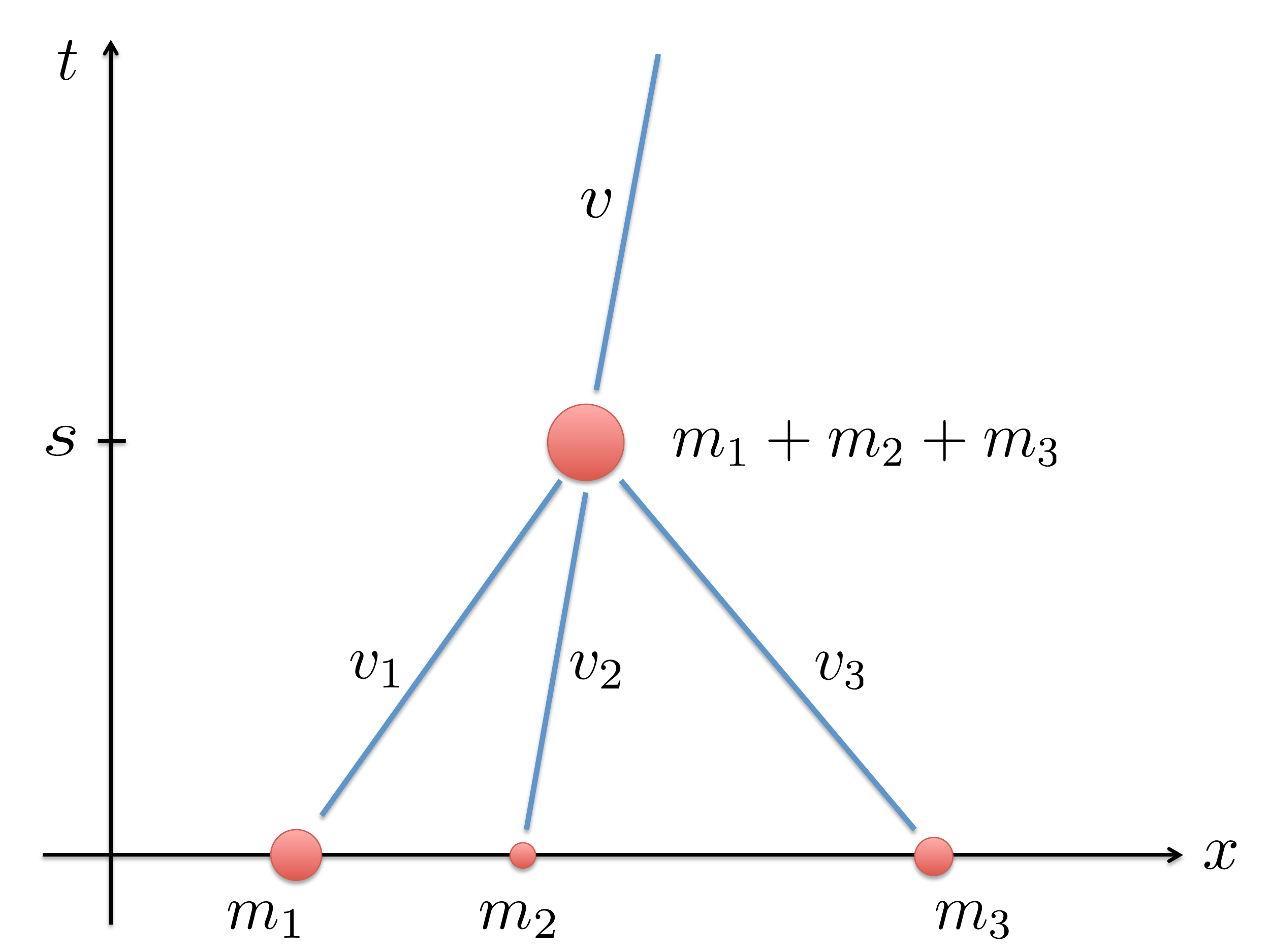}
 \caption{Three point masses $m_1, m_2, m_3$ undergo a perfectly inelastic collision at time $s$.  Here $v$ satisfies 
 $m_1v_1+m_2v_2+m_3v_3=(m_1+m_2+m_3)v$. Also note that these masses are displayed larger than points to emphasize that they are possibly distinct. }\label{Fig1}
\end{figure}

\par The SPS first arose in cosmology in the study of galaxy formation. In particular, Zel'dovich considered these equations in three spatial dimensions when he studied the evolution of matter at low temperatures that wasn't subject to pressure \cite{Gurbatov, Zeldovich}.  To get an idea for the physics involved, we will study a simple scenario in which finitely many particles are constrained to move on the real line.  We assume that these particles move in straight line trajectories when they are not in contact; however, particles undergo perfectly inelastic collisions once they collide.  For example, if the particles with masses $m_1,\dots, m_k>0$ have respective velocities $v_1,\dots, v_k$ before a collision, they will join to form a single particle of mass $m_1+\dots +m_k$ upon collision which moves with velocity $v$ chosen to satisfy 
$$
m_1v_1+\dots +m_kv_k=(m_1+\dots +m_k)v.
$$ 
See Figure \ref{Fig1} for an example.

\par For each $i\in \{1,\dots, N\}$ and $t\ge 0$, we write $\gamma_i(t)\in \R$ for the position of mass $m_i$ at time $t$, which could be by itself or part of a larger mass if it has already collided with another particle. This specification allows us to associate trajectories $\gamma_1,\dots, \gamma_N:[0,\infty)\rightarrow\R$ that track the positions of the respective point masses $m_1,\dots, m_N$.  See Figure \ref{Fig2} for a schematic diagram. It turns out that these trajectories have various natural properties including
$$
\gamma_i(t)=\gamma_j(t), \quad t\ge s
$$
whenever $\gamma_i(s)=\gamma_j(s)$. 

\begin{figure}[h]
\centering
 \includegraphics[width=.7\textwidth]{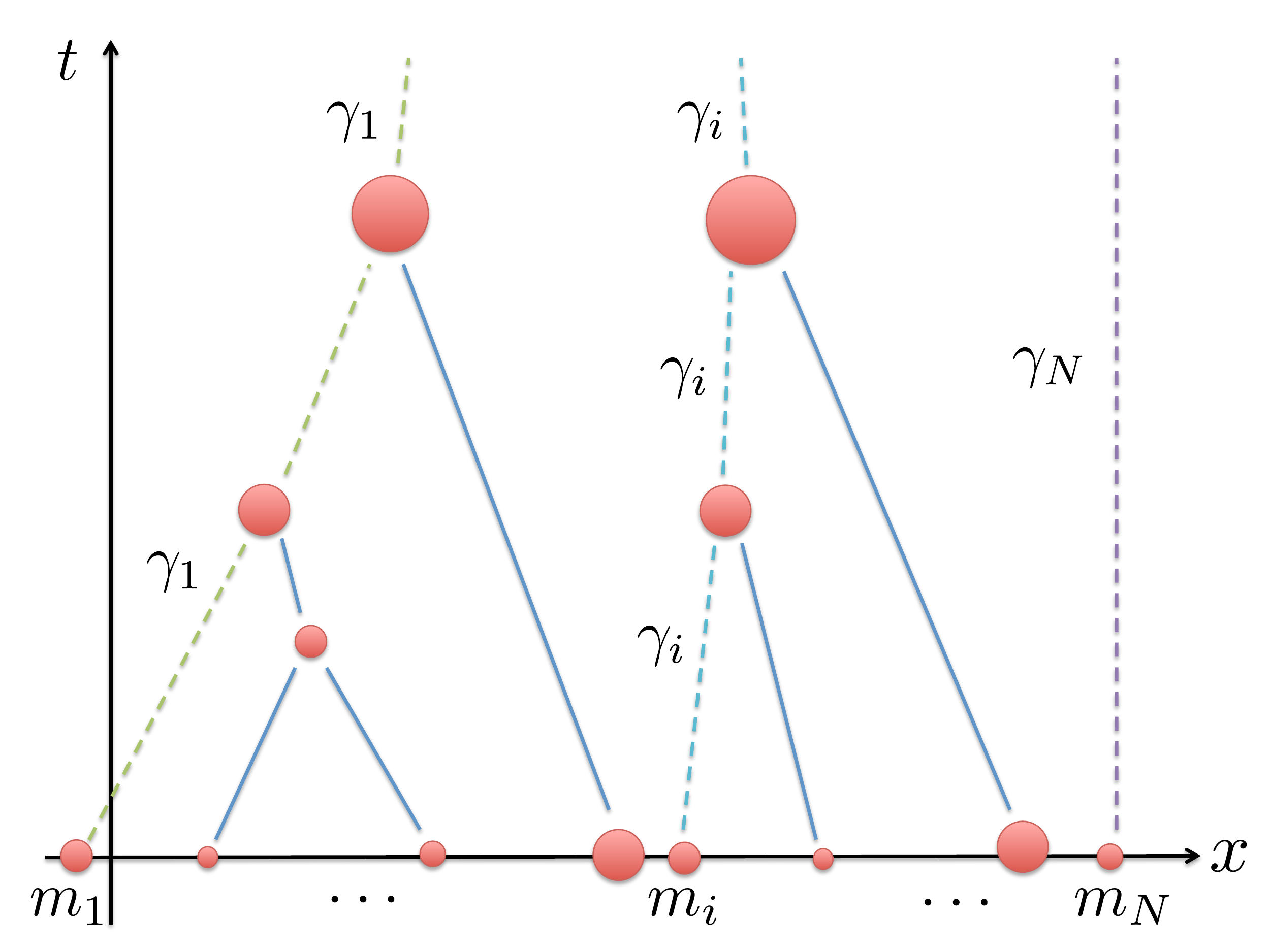}
 \caption{Sticky particle trajectories $\gamma_1,\dots, \gamma_N:[0,\infty)\rightarrow \R$ that track the positions of the respective masses $m_1,\dots, m_N$. Three trajectories $\gamma_1,\gamma_i$ and $\gamma_N$ corresponding to masses $m_1, m_i $ and $m_N$ are shown in dashed line segments for emphasis. }\label{Fig2}
\end{figure}

\par Moreover, sticky particle trajectories can be used to generate a solution pair $\rho$ and $v$ of the SPS. Indeed, we may define the function $\rho=\rho_t$ which takes values in the space of Borel measures on $\R$ via
\be\label{discreteRho}
\rho_t=\sum^N_{i=1}m_i\delta_{\gamma_i(t)},\quad t\ge 0.
\ee
Note that $\rho$ is the mass distribution of the particles as $\rho_t(A)$ is the amount of mass within the set $A\subset \R$ at time $t\ge 0$. We can also set 
\be\label{discreteVee}
v(x,t)=
\begin{cases}
\dot \gamma_i(t+), \quad & x=\gamma_i(t)\\
0, \quad & \text{otherwise}.
\end{cases}
\ee
We note that $v:\R\times[0,\infty)\rightarrow \R$ is Borel measurable and $v(\gamma_i(t),t)$ is the right hand slope of the particles located at position $\gamma_i(t)$ at time $t$. 

\par While $\rho$ and $v$ are not smooth functions, they turn out to satisfy the SPS in a certain sense that we will specify below.   As we expect the total mass to be conserved for all times, we will assume that it is always equal to 1 for convenience.  Consequently, it will be natural for us to work with the space ${\cal P}(\R)$ of Borel probability measures on $\R$.   We recall this space has a natural topology: $(\mu^k)_{k\in \N}\subset {\cal P}(\R)$ converges to $\mu$ {\it narrowly} provided 
\be
\lim_{k\rightarrow\infty}\int_{\R}gd\mu^k=\int_{\R}gd\mu
\ee
for each bounded, continuous $g:\R\rightarrow \R$.

\begin{defn}\label{WeakSolnDefn}
Suppose $\rho_0\in {\cal P}(\R)$ and $v_0:\R\rightarrow \R$ is continuous with 
$$
\int_{\R}|v_0|d\rho_0<\infty.
$$
A narrowly continuous 
$\rho: [0,\infty)\rightarrow {\cal P}(\R); t\mapsto \rho_t$ and a Borel measurable
$v:\R\times[0,\infty)\rightarrow\R$ is a {\it weak solution pair of the sticky particle system}
with the initial conditions \eqref{Init} if the following conditions hold. \\
$(i)$ For each $T>0$,
$$
\int^T_0\int_{\R}v^2d\rho_tdt<\infty.
$$ 
$(ii)$ For each $\phi\in C^\infty_c(\R\times[0,\infty))$,
$$
\int^\infty_0\int_{\R}(\partial_t\phi+v\partial_x\phi)d\rho_tdt+\int_{\R}\phi(\cdot,0)d\rho_0=0.
$$
$(iii)$ For each $\phi\in C^\infty_c(\R\times[0,\infty))$,
$$
\int^\infty_0\int_{\R}(v\partial_t\phi +v^2\partial_x\phi)d\rho_tdt+\int_{\R}\phi(\cdot,0)v_0d\rho_0=0.
$$
\end{defn}

\par It can be shown that the pair $\rho$ and $v$ specified in \eqref{discreteRho} and \eqref{discreteVee} is indeed a weak solution pair with initial mass 
$$
\rho_0=\sum^N_{i=1}m_i\delta_{\gamma_i(0)}
$$
and initial velocity $v_0: \R\rightarrow \R$ chosen to satisfy 
$$
v_0(\gamma_i(0))=\dot \gamma_i(0+)
$$
for $i=1,\dots, N$.  A challenging problem is to show that there is a solution for a general set of initial conditions. This was first accomplished by E, Rykov and Sinai \cite{ERykovSinai} who identified a variational principle for the SPS. Around the same the time, Brenier and Grenier established a general existence theory by reinterpreting the SPS as a single scalar conservation law \cite{BreGre}.   These two approaches appeared to be distinct until they were merged and extended upon by Natile and Savar\'e \cite{NatSav}; see also Cavalletti, Sedjro and Westdickenberg's paper \cite{MR3296602} for a refinement of \cite{NatSav}.  In addition, we mention that these approaches are relevant to the dynamics of collections of sticky particles with more general pairwise interactions as discussed in \cite{BreGan, GNT, MR2438785,MR3359159}.

\par In this work, we will consider Lagrangian coordinates for the sticky particle system as motivated by a probabilistic approach introduced by Dermoune \cite{Dermoune}. This involves finding an absolutely continuous mapping $X: [0,\infty)\rightarrow L^2(\rho_0)$ which satisfies {\it the sticky particle flow equation}
\be\label{FlowEqn}
\dot X(t)=\E_{\rho_0}[v_0| X(t)]\;\; \text{a.e.}\;\;t \ge 0
\ee
and initial condition
\be\label{FlowInit}
X(0)=\id
\ee
$\rho_0$ almost everywhere. Here $\E_{\rho_0}[v_0| X(t)]$ is the conditional expectation of $v_0: \R\rightarrow \R$ with respect to $\rho_0$ given $X(t)$.  In particular, we are asserting that \eqref{FlowEqn} is the natural condition for collections of particles that move freely on the real line and undergo perfectly inelastic collisions when they meet. We note that Dermoune considered a more general setup involving an abstract probability space and showed the existence of a solution for a given initial condition. With regard to his formulation, we content ourselves with the specific probability space $(\R, {\cal B}(\R), \rho_0)$, where ${\cal B}(\R)$ is the Borel sigma algebra on $\R$. 

\par  We will also use the notation 
$$
X(t): \R\rightarrow \R; y\mapsto X(y,t)
$$
when we wish to emphasize spatial dependence. Here $X(y,t)$ denotes the position of the particle at time $t$ which started at position $y$. In particular, we will show that we can design a weak solution pair $\rho$ and $v$ of the SPS with
$$
\dot X(t)=v(X(t),t)\;\; \text{a.e.}\;\;t \ge 0.
$$
In this sense, $X$ is a Lagrangian coordinate.  Our main theorem is as follows. 

\begin{thm}\label{mainThm}
Suppose $\rho_0\in{\cal P}(\R)$ with $$\int_{\R} x^2d\rho_0(x)<\infty$$ and $v_0:\R\rightarrow \R$ absolutely continuous. There is a solution $X$ of the sticky particle flow equation \eqref{FlowEqn} which satisfies the initial condition \eqref{FlowInit} and has the following properties. 
\begin{enumerate}[(i)]

\item For Lebesgue almost every $t,s\ge0$ with $s\le t$,
$$
\int_{\R}\frac{1}{2}\dot X(t)^2d\rho_0\le \int_{\R}\frac{1}{2}\dot X(s)^2d\rho_0\le\int_{\R}\frac{1}{2}v_0^2d\rho_0.
$$

\item For $t\ge 0$ and $y,z\in \textup{supp}(\rho_0)$ with $y\le z$, 
$$
0\le X(z,t)-X(y,t)\le z-y+t\int^z_y|v_0'(x)|dx.
$$

\item For each $0<s\le t$ and $y,z\in \textup{supp}(\rho_0)$,
$$
\frac{1}{t}|X(y,t)-X(z,t)|\le \frac{1}{s}|X(y,s)-X(z,s)|.
$$

\end{enumerate}
\end{thm}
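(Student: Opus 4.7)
The plan is to construct $X$ by discrete sticky particle approximation, extracting bounds from the explicit finite dynamics and then passing to the limit in $AC_{\mathrm{loc}}([0,\infty); L^2(\rho_0))$.

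\textbf{Discrete construction and estimates.} Approximate $\rho_0$ by atomic measures $\rho_0^N = \sum_{i=1}^N m_i^N \delta_{y_i^N}$ with $y_1^N < \cdots < y_N^N$ converging narrowly and with $\int x^2 d\rho_0^N \to \int x^2 d\rho_0$ (e.g.\ the quantile scheme $m_i^N = 1/N$, $y_i^N = F_0^{-1}((i-\tfrac12)/N)$ for $F_0$ the CDF of $\rho_0$), and build the $N$-particle sticky trajectories $\gamma_i^N$ with initial data $\gamma_i^N(0) = y_i^N$, $\dot\gamma_i^N(0+) = v_0(y_i^N)$ via the piecewise-affine free motion and momentum-conserving merger rule. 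Setting $X^N(y_i^N, t) := \gamma_i^N(t)$, verify (i)--(iii) at the discrete level: (i) kinetic energy is conserved between collisions and strictly drops at each collision by Jensen applied to $v^2$; (ii) order preservation gives the lower bound, and for the upper bound exploit the variational representation (Natile--Savar\'e) of $X^N(\cdot,t)$ as the $L^2(\rho_0^N)$-projection of $y \mapsto y + t v_0(y)$ onto non-decreasing functions together with an oscillation bound for isotonic regression, which controls $X^N(y_j^N,t) - X^N(y_i^N,t)$ by the range of $y + tv_0(y)$ over $[y_i^N,y_j^N]$, itself at most $(y_j^N - y_i^N) + t \int_{y_i^N}^{y_j^N} |v_0'|\,dx$ by the absolute continuity of $v_0$; (iii) is the classical $1/t$-contraction, valid between collisions because the chord $\gamma_j^N(t) - \gamma_i^N(t)$ is affine with nonnegative value at $t = 0$, and stable across collisions since mergers only shrink the separation.

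\textbf{Compactness and limit.} Property (i) yields $\int |\dot X^N(t)|^2 d\rho_0^N \le \int v_0^2 d\rho_0^N$ uniformly in $N$ and $t$, hence uniform absolute continuity of $t \mapsto X^N(t)$ in $L^2(\rho_0^N)$. Identifying $L^2(\rho_0^N)$ isometrically with a subspace of $L^2([0,1])$ via quantile functions, Arzel\`a--Ascoli produces, along a subsequence, a limit $X \in AC_{\mathrm{loc}}([0,\infty); L^2(\rho_0))$ with $X(0) = \id$. Properties (i)--(iii) pass to the limit by lower semicontinuity for (i) and pointwise stability on $\mathrm{supp}(\rho_0)$ for (ii) and (iii).

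\textbf{Flow equation and main obstacle.} The remaining step, and principal difficulty, is verifying $\dot X(t) = \E_{\rho_0}[v_0 \mid X(t)]$ for almost every $t$. For each $N$ the identity $\dot X^N(t) = \E_{\rho_0^N}[v_0 \mid X^N(t)]$ holds by construction, since the common velocity of a cluster equals the momentum-weighted mean of its members' initial velocities. Passing this to the limit is delicate because the conditioning $\sigma$-algebra $\sigma(X^N(t))$ depends on $X^N$ and conditional expectation is not continuous under narrow convergence of the conditioning variable. The strategy is to test against bounded continuous $g$: the discrete identity
$\int g(X^N(y,t))\dot X^N(y,t)\,d\rho_0^N(y) = \int g(X^N(y,t)) v_0(y)\,d\rho_0^N(y)$
holds for all such $g$ because $\dot X^N(\cdot,t)$ is constant on each level set of $X^N(\cdot,t)$; integrate in $t$ to exploit uniform integrability from (i), pass to the limit using narrow convergence of the joint laws $(X^N(\cdot,t), \dot X^N(\cdot,t))_\# \rho_0^N$, obtain the corresponding identity for $X$, and conclude $\dot X(t) = \E_{\rho_0}[v_0 \mid X(t)]$ by varying $g$, with a Lebesgue differentiation argument to recover the a.e.\ statement in $t$. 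The delicate point is the joint narrow convergence of $(X^N, \dot X^N)$: $\dot X^N$ is only weakly $L^2$-compact and not obviously strongly convergent in the joint sense required, so this step will rely on the explicit cluster structure of $\dot X^N$ (piecewise constancy on level sets of the monotone map $X^N(\cdot,t)$) and the bound (ii) to rule out concentration or escape of mass.
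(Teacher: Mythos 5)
Your overall architecture (discrete sticky particles, uniform estimates, compactness, passage to the limit) is the same as the paper's, and your discrete estimates are mostly workable, but two points already deserve care. For (ii), invoking the Natile--Savar\'e projection representation plus an isotonic-regression oscillation bound is a legitimate alternative to the paper's concavity argument. For (iii), however, your sketch is too quick: for a \emph{non-adjacent} pair the chord $\gamma_j^N-\gamma_i^N$ can have its slope \emph{increase} at a collision (e.g.\ when the upper particle merges with intermediate particles catching up from below), so ``affine between collisions, and mergers only shrink the separation'' does not by itself show that the backward extension of each affine piece is nonnegative at $t=0$, which is exactly what $t\mapsto \tfrac1t(\gamma_j^N-\gamma_i^N)$ nonincreasing requires. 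One must first prove the slope monotonicity for \emph{adjacent} pairs up to their meeting time and then telescope, as in Proposition \ref{PropQSPP} via Lemma \ref{slopeDecreaseYLemma}.

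The genuine gap is in the verification of the flow equation for the limit $X$. First, even granting the limiting identity $\int_0^t\int_\R \dot X(\tau)\,g(X(\tau))\,d\rho_0\,d\tau=\int_0^t\int_\R v_0\,g(X(\tau))\,d\rho_0\,d\tau$ for all $g\in C_b(\R)$ and differentiating in $t$, varying $g$ only gives $\E_{\rho_0}[\dot X(t)\,|\,X(t)]=\E_{\rho_0}[v_0\,|\,X(t)]$; to conclude $\dot X(t)=\E_{\rho_0}[v_0\,|\,X(t)]$ you must additionally show that $\dot X(t)$ is $\sigma(X(t))$-measurable, i.e.\ $\dot X(t)=u(X(t))$ for some Borel $u$. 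Your proposal never establishes this for the limit: the piecewise constancy of $\dot X^N$ on level sets of $X^N(\cdot,t)$ is a statement about the approximations and does not pass to the limit by itself. The paper obtains it by carrying the discrete transition maps to the limit, $X(t)=f_{t,s}(X(s))$ with $\mathrm{Lip}(f_{t,s})\le t/s$, whence $\dot X(t)=\lim_n n\bigl(f_{t+1/n,t}-\id\bigr)(X(t))$ is a Borel function of $X(t)$ (Corollary \ref{dotufunctionXtee}); some substitute for this is indispensable. Second, the step you yourself flag as delicate --- joint narrow convergence of $(X^N,\dot X^N)_{\#}\rho_0^N$ --- is left unresolved; weak $L^2$ compactness of $\dot X^N$ does not provide it, and ``rely on the cluster structure'' is not an argument. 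This difficulty can be avoided entirely: with $F'=g$ one has $\dot X^N(\tau)g(X^N(\tau))=\tfrac{d}{d\tau}F(X^N(\tau))$, so the time integral telescopes to $\int_\R\bigl(F(X^N(t))-F(\id)\bigr)d\rho_0^N$, which converges using only the convergence of $X^N(t)$ at fixed times, with no information on $\dot X^N$ (Lemma \ref{LemCondotXkaryJay}). Until these two points are filled in, the proof of the flow equation is incomplete.
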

\begin{rem}
Since $v_0:\R\rightarrow \R$ is absolutely continuous, it grows at most linearly on $\R$. As a result, $\displaystyle\int_{\R}v_0^2d\rho_0<\infty$.  We also remind the reader that the support of $\rho_0$ is defined
$$
\textup{supp}(\rho_0):=\{y\in \R: \rho_0((y-\delta,y+\delta))>0\; \text{for all}\;\delta>0\}.
$$
\end{rem}

\par A corollary of the above theorem is that there exists a weak solution of the SPS for given initial conditions.   We emphasize that the following result has already been proven or follows from previous efforts such as \cite{BreGre, ERykovSinai, NatSav}. Our goal is to verify this claim through proving Theorem \ref{mainThm} and in particular to give a more thorough analysis of \eqref{FlowEqn} than was done in \cite{Dermoune}.
\begin{cor}\label{oldThm}
Suppose $\rho_0\in{\cal P}(\R)$ with $$\int_{\R}x^2d\rho_0(x)<\infty$$
and $v_0:\R\rightarrow \R$ absolutely continuous.  There is a weak solution pair $\rho$ and $v$ of the SPS with initial conditions \eqref{Init}.   
\begin{enumerate}[(i)]

\item For Lebesgue almost every $t,s\ge 0$ with $s\le t$,  
$$
\int_{\R}\frac{1}{2}v(x,t)^2d\rho_t(x)\le \int_{\R}\frac{1}{2}v(x,s)^2d\rho_s(x)\le \int_{\R}\frac{1}{2}v_0(x)^2d\rho_0(x).
$$

\item For Lebesgue almost every $t\in (0,\infty)$,
\be\label{Entropy}
(v(x,t)-v(y,t))(x-y)\le \frac{1}{t}(x-y)^2
\ee
for $\rho_t$ almost every $x,y\in \R$. 
\end{enumerate}
\end{cor}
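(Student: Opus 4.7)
I would build $(\rho,v)$ directly from the Lagrangian map $X$ produced by Theorem \ref{mainThm}. Set
$$\rho_t := X(t)_{\#}\rho_0$$
and choose a Borel measurable $v:\R\times[0,\infty)\to\R$ with $v(X(y,t),t)=\dot X(y,t)$ for $\rho_0$-almost every $y$ at almost every $t$. Such a $v$ exists because the flow equation \eqref{FlowEqn} forces $\dot X(y,t)$ to be a $\sigma(X(t))$-measurable function of $y$; Doob--Dynkin together with a measurable-selection argument yield a jointly measurable $v$, which one may concretely take as the regression $v(x,t)=\E_{\rho_0}[v_0\mid X(t)=x]$. Narrow continuity of $t\mapsto\rho_t$ is inherited from the $L^2(\rho_0)$-continuity of $X$, and change-of-variables gives $\int_\R v(x,t)^2\,d\rho_t=\int_\R \dot X(y,t)^2\,d\rho_0$, which combined with Theorem \ref{mainThm}(i) yields both integrability condition (i) of Definition \ref{WeakSolnDefn} and part (i) of the corollary.

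\textbf{Mass and momentum equations.} For $\phi\in C^\infty_c(\R\times[0,\infty))$, the mass test integral transforms under the push-forward into
$$\int_\R(\partial_t\phi+v\partial_x\phi)\,d\rho_t=\int_\R\frac{d}{dt}\phi(X(y,t),t)\,d\rho_0(y),$$
and Fubini together with the fundamental theorem of calculus applied to the absolutely continuous curve $t\mapsto\phi(X(y,t),t)$ collapse the $t$-integral to $-\phi(y,0)$, giving Definition \ref{WeakSolnDefn}(ii). The momentum test integrand
$$\dot X(y,t)\partial_t\phi(X(y,t),t)+\dot X(y,t)^2\partial_x\phi(X(y,t),t)$$
cannot be integrated by parts in $t$ directly, because $X$ need not be twice differentiable: collisions create jumps in $\dot X$. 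The key observation is that both $\partial_t\phi(X(y,t),t)$ and $\dot X(y,t)\partial_x\phi(X(y,t),t)$ are $\sigma(X(t))$-measurable, so the identity $\dot X(t)=\E_{\rho_0}[v_0\mid X(t)]$ lets me swap one factor of $\dot X$ for $v_0$ in each term. The integrand becomes $v_0(y)\frac{d}{dt}\phi(X(y,t),t)$, and Fubini plus FTC again produce $-v_0(y)\phi(y,0)$, giving Definition \ref{WeakSolnDefn}(iii).

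\textbf{Entropy inequality and main obstacle.} Under the push-forward, part (ii) of the corollary is equivalent to
$$(\dot X(y_1,t)-\dot X(y_2,t))(X(y_1,t)-X(y_2,t))\le \tfrac{1}{t}(X(y_1,t)-X(y_2,t))^2$$
for $\rho_0\otimes\rho_0$-a.e.\ $(y_1,y_2)$, which drops out by differentiating the monotone relation in Theorem \ref{mainThm}(iii) at any $t$ of joint differentiability of $X(y_1,\cdot)$ and $X(y_2,\cdot)$. The only genuinely delicate step is the momentum equation: the missing $\ddot X$ is replaced by the conditional expectation identity, which is precisely the Lagrangian avatar of momentum conservation across inelastic collisions. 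Verifying the joint measurability of $v$ and justifying the chain-rule/Fubini manipulations at the right level of regularity are the remaining pieces of bookkeeping.
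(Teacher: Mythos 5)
Your construction is essentially the paper's: push forward $\rho_t=X(t)_{\#}\rho_0$, let $v$ be the conditional-expectation velocity so that $v(X(t),t)=\dot X(t)$, verify conditions (ii)--(iii) of Definition \ref{WeakSolnDefn} by swapping one factor of $\dot X$ for $v_0$ against $\sigma(X(t))$-measurable test factors and then applying the chain rule/FTC along $t\mapsto\phi(X(t),t)$, get the energy inequality from Theorem \ref{mainThm}(i), and get the entropy inequality by differentiating the monotone quantity from Theorem \ref{mainThm}(iii). The one place where you defer exactly the step the paper actually carries out is the \emph{joint} Borel measurability of $v$ on $\R\times[0,\infty)$, which Definition \ref{WeakSolnDefn} requires. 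Doob--Dynkin only gives, for each fixed $t$, a Borel $u_t$ with $\E_{\rho_0}[v_0\mid X(t)]=u_t(X(t))$, determined up to a $\rho_t$-null set; the assignment $(x,t)\mapsto u_t(x)$ obtained this way is not automatically measurable in the pair, and ``a measurable-selection argument'' is not a routine citation here. The paper resolves this with a concrete device: it forms the $e^{-t}$-weighted space-time measures $\mu(S)=\int_0^\infty\int_\R \chi_S(X(t),t)\,d\rho_0\,e^{-t}dt$ and $\pi(S)=\int_0^\infty\int_\R v_0\,\chi_S(X(t),t)\,d\rho_0\,e^{-t}dt$, notes $\pi\ll\mu$ by H\"older, and takes $v$ to be the Radon--Nikodym derivative, which is jointly Borel by construction and satisfies $v(X(t),t)=\E_{\rho_0}[v_0\mid X(t)]=\dot X(t)$ for a.e.\ $t$. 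You should either reproduce this (or an equivalent) argument rather than leave it as bookkeeping.

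A small genuine difference in your favor: for part (ii) you transfer the Lagrangian inequality through the product map $(X(t),X(t))_{\#}(\rho_0\otimes\rho_0)=\rho_t\otimes\rho_t$, and since the exceptional set $\{(x,y):(v(x,t)-v(y,t))(x-y)>\tfrac1t(x-y)^2\}$ is Borel once $v$ is Borel, this gives the $\rho_t\otimes\rho_t$-a.e.\ statement directly and sidesteps the paper's route, which establishes the inequality for all $x,y$ in the forward image $X(t)(E)$ of a full-measure $F_\sigma$ set and then needs the appendix proposition (continuous images of countable unions of closed sets are Borel) plus $\rho_t(X(t)(E))=1$. Note, though, that the paper's version is slightly stronger (a single full-$\rho_t$-measure set on which the inequality holds for \emph{all} pairs), and that in differentiating Theorem \ref{mainThm}(iii) you should be explicit about the order of quantifiers: you need a full-measure set of times $t$ at which $\dot X(t)$ exists and equals $v(X(t),t)$ $\rho_0$-a.e., and then argue for $\rho_0\otimes\rho_0$-a.e.\ pairs at such $t$ — the same level of care the paper's Step 3 takes.
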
 
We will prove this corollary at the end of this paper, right after verifying Theorem \ref{mainThm}. This paper is organized as follows. First, we will briefly discuss the preliminary material needed in our study and make some observations on sticky particle trajectories. Then we will verify that solutions of the sticky particle flow equation \eqref{FlowEqn} which are associated with sticky particle trajectories are compact in a certain sense. Finally, we will show that we can always find a subsequence of these particular types of solutions that converges to a general solution.

\section{Preliminaries}\label{PreLimSect}
In this section, we will briefly outline some of the notation and review the few technical preliminaries needed for our study.

\subsection{Convergence of probability measures}
We will denote ${\cal P}(\R^d)$ as the space of Borel probability measures on $\R^d$ and write $C_b(\R^d)$ for the space of bounded continuous functions on $\R^d$. As noted in the introduction, ${\cal P}(\R^d)$ is endowed with a natural topology defined as follows. A sequence $(\mu^k)_{k\in\N}\subset {\cal P}(\R^d)$ converges to $\mu$ in ${\cal P}(\R^d)$ {\it narrowly} provided 
\be\label{gConvUndermukay}
\lim_{k\rightarrow\infty}\int_{\R^d}gd\mu^k=\int_{\R^d}gd\mu
\ee
for each $g\in C_b(\R^d)$.  It turns out that this topology can be metrized by a metric of the form
\be\label{NarrowMetric}
\mathcal{d}(\mu,\nu):=\sum^\infty_{j=1}\frac{1}{2^j}\left|\int_{\R^d}h_jd\mu-\int_{\R^d}h_jd\nu\right|,\quad \mu,\nu\in {\cal P}(\R^d).
\ee
Here each $h_j:\R^d\rightarrow \R$ satisfies $\|h_j\|_\infty\le 1$ and Lip$(h_j)\le 1$ (Remark 5.1.1 of \cite{AGS}). Moreover, $({\cal P}(\R^d), \mathcal{d})$ is a complete metric space. 

\par It will be useful for us to know when a sequence of measures in ${\cal P}(\R^d)$ has a narrowly convergent subsequence.  Prokhorov's theorem asserts that  $(\mu^k)_{k\in\N}\subset {\cal P}(\R^d)$ has a narrowly convergent subsequence if and only if there is $\varphi: \R^d\rightarrow [0,\infty]$ with compact sublevel sets for which
\be\label{prohorovCond}
\sup_{k\in \N}\int_{\R^d}\varphi d\mu^k<\infty
\ee
(Theorem 5.1.3 of \cite{AGS}). It will also be convenient to know when \eqref{gConvUndermukay} holds for unbounded $g$. It turns out that if $g: \R^d\rightarrow \R$ is continuous and  $|g|$ is {\it uniformly integrable} with respect to $(\mu^k)_{k\in\N}$ then \eqref{gConvUndermukay} holds. That is, provided
$$
\lim_{R\rightarrow\infty}\int_{|g|\ge R}|g|d\mu^k=0
$$
uniformly in $k\in \N$ (Lemma 5.1.7 of \cite{AGS}).    

\par We will also need the following lemma. 

\begin{lem}\label{LemmaVariantNarrowCon}
Suppose $(g^k)_{k\in\N}$ is a sequence of continuous functions on $\R^d$ which converges locally uniformly to $g$ and $(\mu^k)_{k\in\N}\subset {\cal P}(\R^d)$ converges narrowly to $\mu$. Further assume there is $h:\R^d\rightarrow [0,\infty)$ with compact sublevel sets, which is uniformly integrable with respect to  $(\mu^k)_{k\in\N}$ and  satisfies
\be\label{gkaydominatedbyH}
|g^k|\le h
\ee
for each $k\in \N$. Then
\be\label{UniformonvUndermukay}
\lim_{k\rightarrow\infty}\int_{\R^d}g^kd\mu^k=\int_{\R^d}gd\mu.
\ee
\end{lem}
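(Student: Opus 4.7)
The plan is to decompose the difference $\int g^k\,d\mu^k - \int g\,d\mu$ into two pieces, one that vanishes because $g$ alone is a suitable test function and one that vanishes because $g^k$ converges to $g$ uniformly on the sublevel sets of $h$.

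First I would record two easy preliminary observations. Since the $g^k$ are continuous and converge locally uniformly to $g$, the limit $g$ is continuous. Since $|g^k|\le h$ for every $k$ and $g^k\to g$ pointwise, passing to the limit gives $|g|\le h$. In particular $|g|$ is uniformly integrable with respect to $(\mu^k)_{k\in\N}$, because $h$ is, and by Fatou applied through narrow convergence, $\int_{\R^d}h\,d\mu<\infty$ as well.

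Next I would split
$$
\int_{\R^d}g^k\,d\mu^k-\int_{\R^d}g\,d\mu=\int_{\R^d}(g^k-g)\,d\mu^k+\left(\int_{\R^d}g\,d\mu^k-\int_{\R^d}g\,d\mu\right).
$$
The second term tends to $0$ directly by the Lemma 5.1.7 of \cite{AGS} that is quoted in the excerpt, applied to the continuous function $g$ whose absolute value is uniformly integrable with respect to $(\mu^k)_{k\in\N}$. For the first term I would fix $\varepsilon>0$ and, using the uniform integrability of $h$, pick $R>0$ so large that
$$
\sup_{k\in\N}\int_{\{h\ge R\}}h\,d\mu^k<\varepsilon.
$$
The set $K_R:=\{h\le R\}$ is compact by the sublevel-set hypothesis on $h$, so by local uniform convergence there is $k_0$ with $\sup_{K_R}|g^k-g|\le \varepsilon$ for $k\ge k_0$. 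Using $|g^k-g|\le |g^k|+|g|\le 2h$ off $K_R$, for $k\ge k_0$,
$$
\left|\int_{\R^d}(g^k-g)\,d\mu^k\right|\le \int_{K_R}|g^k-g|\,d\mu^k+\int_{K_R^c}2h\,d\mu^k\le \varepsilon+2\varepsilon.
$$
Combining the two estimates yields \eqref{UniformonvUndermukay}.

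I do not expect any real obstacle; the only point that deserves care is justifying that $g$ itself is a legal test function for Lemma 5.1.7 of \cite{AGS}, which is why the preliminary observations that $g$ is continuous and $|g|\le h$ are isolated first.
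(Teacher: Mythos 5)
Your proof is correct and follows essentially the same route as the paper: bound $\left|\int g^k\,d\mu^k-\int g\,d\mu\right|$ by $\int|g^k-g|\,d\mu^k$ plus the error from applying Lemma 5.1.7 of \cite{AGS} to $g$ (legal since $|g|\le h$), then split the first integral over the compact sublevel set $\{h\le R\}$, where local uniform convergence applies, and its complement, controlled by the uniform integrability of $h$. The only differences are cosmetic (explicit constants and the unneeded aside that $\int h\,d\mu<\infty$).
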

\begin{proof} 
Fix $\epsilon>0$ and choose $R>0$ so large that 
$$
\int_{h\ge R}hd\mu^k\le \frac{\epsilon}{4}
$$   
for all $k\in \N$. In view of \eqref{gkaydominatedbyH}, $|g|\le h$. Thus, $|g|$ is uniformly integrable with respect to $(\mu^k)_{k\in\N}$ and so there is $N\in \N$ such that
$$
\left|\int_{\R^d}gd\mu^k-\int_{\R^d}gd\mu\right|<\frac{\epsilon}{2}
$$
for all $k\ge N$. 

\par It follows that
\begin{align*}
\left|\int_{\R^d}g^kd\mu^k-\int_{\R^d}gd\mu\right|&=\left|\int_{\R^d}(g^k-g)d\mu^k+\int_{\R^d}gd\mu^k-\int_{\R^d}gd\mu\right|\\
&\le \int_{\R^d}|g^k-g|d\mu^k+\frac{\epsilon}{2}\\
&= \int_{h\le R}|g^k-g|d\mu^k+\int_{h\ge R}|g^k-g|d\mu^k+\frac{\epsilon}{2}\\
&\le \int_{h\le R}|g^k-g|d\mu^k+2\int_{h\ge R}hd\mu^k+\frac{\epsilon}{2}\\
&\le \int_{h\le R}|g^k-g|d\mu^k+\epsilon
\end{align*} 
for $k\ge N$.  As $\{h\le R\}$ is compact and $g^k\rightarrow g$ uniformly on $\{h\le R\}$,
$$
\limsup_{k\rightarrow\infty}\left|\int_{\R^d}g^kd\mu^k-\int_{\R^d}gd\mu\right|\le\epsilon. 
$$
We conclude \eqref{UniformonvUndermukay} as $\epsilon>0$ is arbitrary. 
\end{proof}

\subsection{The push-forward}
For a Borel map $f:\R^d\rightarrow \R^n$ and $\mu\in {\cal P}(\R^d)$, we define the {\it push-forward} of $\mu$ through $f$ as the probability measure $f_{\#}\mu\in {\cal P}(\R^n)$ which satisfies 
$$
\int_{\R^n}g(y)d(f_{\#}\mu)(y)=\int_{\R^d}g(f(x))d\mu(x)
$$
for each $g\in C_b(\R^n)$.  We also note 
$$
f_{\#}\mu(A)=\mu(f^{-1}(A))
$$
for Borel $A\subset \R^n$.  
\begin{rem}
$(i)$ We will be primarily interested in the dimensions $d,n\in \{1,2\}$. $(ii)$ We could have easily have presented our remarks involving the convergence of probability measures and the push-forward in terms of complete, separable metric spaces instead of focusing on Euclidean spaces.
\end{rem}

\subsection{Conditional expectation}
Suppose $\mu\in {\cal P}(\R)$, $g\in L^2(\mu)$ and $Y: \R\rightarrow \R$ is Borel measurable.
A {\it conditional expectation} of $g$ with respect to $\mu$ given $Y$ is an $L^2(\mu)$ function $\E_{\mu}[g|Y]$ which satisfies
\be\label{IntegralCondExpCond}
\displaystyle\int_{\R}\E_\mu[g|Y]\; h(Y)d\mu=\int_{\R}g\; h(Y)d\mu
\ee
for all Borel $h:\R\rightarrow \R$ with 
\be\label{hSqIntegrable}
\int_{\R}h(Y)^2d\mu<\infty
\ee
and 
$$
\displaystyle\E_{\mu}[g|Y]=f(Y)\quad \mu\;\text{a.e.}
$$
for some Borel $f:\R\rightarrow \R$ which satisfies \eqref{hSqIntegrable} (with $f$ replacing $h$).  

\par The existence of a conditional expectation follows from a simple application of the Radon-Nikodym theorem, and it is also not hard to show that conditional expectations are uniquely determined up to a null set for $\mu$. Moreover, choosing $h(Y)=\E_\mu[g|Y]$ in \eqref{IntegralCondExpCond} and using the Cauchy-Schwarz inequality gives 
\be\label{JensenINeq}
\int_{\R}\E_{\mu}[g|Y]^2d\mu\le \int_{\R}g^2d\mu.
\ee
Finally, we recall that conditional expectation has the ``tower property," which asserts  
\be\label{TowerProp}
\E_\mu[\E_{\mu}[g|Y] |e(Y)]=\E_{\mu}[g|e(Y)]
\ee
for any Borel $e:\R\rightarrow \R$.

\section{Sticky particle trajectories}\label{SPSsection}
We will now study the sticky particle trajectories mentioned in the introduction. To this end, we will fix $m_1,\dots, m_N>0$ with 
$$
\sum^N_{i=1}m_i=1,
$$
distinct $x_1,\dots, x_N\in \R$, and $v_1,\dots, v_N\in \R$ throughout this section.   These quantities represent the respective masses, initial positions and initial velocities of a collection of particles that will move freely and undergo perfectly inelastic collisions when they collide.   We will ultimately argue that we can always associate a collection of sticky particle trajectories $\gamma_1,\dots,\gamma_N$ to this initial data that has the necessary features in order to build a weak solution pair of the SPS out of them. 

\subsection{Basic properties}
We will first note that sticky particle trajectories exist. In the following proposition, we will use the notation 
$$
f(t\pm)=\lim_{h\rightarrow 0^\pm}f(t+h)
$$
for the right $f(t+)$ and left $f(t-)$ limits of $f$ at $t$, respectively. However, we will omit a proof of the following proposition as we have already justified this claim in a related work (Proposition 2.1 in \cite{Hynd}).  

\begin{prop}\label{StickyParticlesExist}
There are continuous, piecewise linear paths 
$$
\gamma_1,\dots,\gamma_N : [0,\infty)\rightarrow \R
$$
with the following properties. \\
(i) For $i=1,\dots, N$,
$$
\gamma_i(0)=x_i\quad \text{and}\quad \dot\gamma_i(0+)=v_i.
$$
(ii) For $i,j=1,\dots, N$, $0\le s\le t$ and $\gamma_i(s)=\gamma_j(s)$ imply 
$$
\gamma_i(t)=\gamma_j(t).
$$
(iii) If $t>0$, $\{i_1,\dots, i_k\}\subset\{1,\dots, N\}$, and
$$
\gamma_{i_1}(t)=\dots=\gamma_{i_k}(t)\neq \gamma_i(t)
$$
for $i\not\in\{i_1,\dots, i_k\}$, then
$$
\dot\gamma_{i_j}(t+)=\frac{m_{i_1}\dot\gamma_{i_1}(t-)+\dots+m_{i_k}\dot\gamma_{i_k}(t-)}{m_{i_1}+\dots+m_{i_k}}
$$
for $j=1,\dots, k$.
\end{prop}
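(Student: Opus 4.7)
The plan is a direct inductive construction in which the trajectories are assembled on successive intervals delimited by collision events. Set $t_0 = 0$ and, on the first interval, let $\gamma_i(t) = x_i + v_i t$ for each $i$; define $t_1 \in (0,\infty]$ to be the first time at which two of these linear paths meet, and if $t_1 = \infty$ declare the construction complete. Otherwise, at $t = t_1$ I partition $\{1,\dots,N\}$ into clusters by the equivalence relation $i \sim j \iff \gamma_i(t_1) = \gamma_j(t_1)$, assign to each cluster $C$ the momentum-weighted velocity
$$
w_C = \frac{\sum_{i \in C} m_i \dot\gamma_i(t_1-)}{\sum_{i \in C} m_i},
$$
and extend every $\gamma_i$ with $i \in C$ linearly with slope $w_C$ on $[t_1, t_2]$, where $t_2 > t_1$ is the next time at which two distinct clusters meet. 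Iterating produces a continuous piecewise-linear path $\gamma_i$ on $[0,\infty)$.

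For the construction to terminate in finitely many steps I would observe that at every collision time at least two clusters coalesce into one, so the number of clusters strictly decreases. Since one begins with at most $N$ clusters, there are at most $N-1$ collision events in total, and on the final interval all trajectories continue linearly to $+\infty$. Moreover, $t_{k+1} > t_k$ at every step because immediately after merging the clusters occupy distinct positions and move with constant velocities, so the first time two of them again coincide is strictly positive.

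Properties (i)--(iii) are then read off from the construction. Property (i) holds by the definition of the first segment. Property (iii) holds at each $t_k$ by the defining formula for $w_C$, applied to the cluster containing the $i_j$'s. Property (ii) follows from the observation that once two indices share a cluster at a collision time they remain in the same cluster at every later collision time (cluster membership can only grow); on every subsequent linear segment such indices share the same position and the same slope, so $\gamma_i \equiv \gamma_j$ on $[s,\infty)$ whenever $\gamma_i(s) = \gamma_j(s)$.

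The main bookkeeping subtlety will be handling a collision time $t_k$ at which several independent collisions occur simultaneously at different spatial locations. The partition into clusters must be defined purely by equality of position, and one must check that the distinct post-collision clusters remain spatially separated at $t_k^+$, which they do because they arose from disjoint meeting points. With this point addressed the induction closes cleanly, and a detailed execution of the argument appears as Proposition 2.1 in \cite{Hynd}, which is why no proof is reproduced here.
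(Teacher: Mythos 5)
Your construction is correct and is essentially the argument the paper itself relies on: the paper omits the proof of this proposition and defers to Proposition 2.1 of \cite{Hynd}, which carries out exactly this induction on collision times, merging position-clusters and assigning them momentum-averaged velocities. The only point worth making explicit is that property $(iii)$ must also hold at times $t$ that are not collision times, where it is trivial because all trajectories sharing a position there already move with a common slope, so the left and right derivatives coincide with their weighted average.
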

\begin{rem}\label{interchangeSlopeLimit}
Since $\gamma_i$ is piecewise linear, the limits $\dot\gamma_i(t+)$ and $\dot\gamma_i(t-)$ exist. Moreover, they can be computed as follows
$$
\dot\gamma_i(t\pm)=\lim_{h\rightarrow 0^\pm}\frac{\gamma_i(t+h)-\gamma_i(t)}{h}.
$$
\end{rem}
We also note that property $(iii)$ implies a more general {\it averaging property}, which is stated below. This is the main tool that can be used to show that $\rho$ and $v$ defined in \eqref{discreteRho} and \eqref{discreteVee} constitute a weak solution pair of the SPS. We will omit the proof of this fact as we have verified it in earlier work (Proposition 2.5 in \cite{Hynd}). 
\begin{cor}\label{averagingCorollary}
For $g:\R\rightarrow \R$ and $0\le s\le t$,
\be\label{AveragingProp}
\sum^N_{i=1}m_ig(\gamma_i(t))\dot\gamma_i(t+)= \sum^N_{i=1}m_ig(\gamma_i(t))\dot\gamma_i(s+).
\ee
\end{cor}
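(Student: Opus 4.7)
The plan is to show that the map $h:[0,t]\to\R$ defined by
\[
h(\tau):=\sum_{i=1}^N m_i\,g(\gamma_i(t))\,\dot\gamma_i(\tau+)
\]
is constant on $[0,t]$. Since $h(s)$ is the right-hand side and $h(t)$ is the left-hand side of \eqref{AveragingProp}, constancy of $h$ gives the desired identity. Because each $\gamma_i$ is continuous and piecewise linear, there are only finitely many times $0<\tau_1<\cdots<\tau_M\le t$ at which some $\dot\gamma_i(\cdot+)$ can jump, and these are precisely the collision times. Between consecutive collision times every slope $\dot\gamma_i(\tau+)$ is constant in $\tau$, so $h$ is piecewise constant and it suffices to verify $h(\tau_j)=h(\tau_j-)$ for each $j=1,\dots,M$.

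Fix a collision time $\tau=\tau_j$. The collisions occurring at time $\tau$ partition the set of particles whose slope actually changes at $\tau$ into finitely many disjoint groups $S_1,\dots,S_L$, where each $S_\ell$ is a maximal collection of indices sharing a common position at $\tau$. Particles not in any $S_\ell$ have $\dot\gamma_i(\tau+)=\dot\gamma_i(\tau-)$ and contribute nothing to $h(\tau)-h(\tau-)$. Property (iii) of Proposition \ref{StickyParticlesExist} applied to $S_\ell$ yields the momentum identity
\[
\sum_{i\in S_\ell} m_i\,\dot\gamma_i(\tau+)=\sum_{i\in S_\ell} m_i\,\dot\gamma_i(\tau-).
\]
Moreover, any two indices $i,j\in S_\ell$ satisfy $\gamma_i(\tau)=\gamma_j(\tau)$, so property (ii) of Proposition \ref{StickyParticlesExist} propagates this equality forward to time $t\ge\tau$ and forces $g(\gamma_i(t))=g(\gamma_j(t))=:c_\ell$. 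Factoring $c_\ell$ out and invoking the momentum identity gives
\[
\sum_{i\in S_\ell} m_i\,g(\gamma_i(t))\bigl(\dot\gamma_i(\tau+)-\dot\gamma_i(\tau-)\bigr)=c_\ell\sum_{i\in S_\ell} m_i\bigl(\dot\gamma_i(\tau+)-\dot\gamma_i(\tau-)\bigr)=0.
\]
Summing over $\ell=1,\dots,L$ establishes $h(\tau_j)-h(\tau_j-)=0$, completing the argument.

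The main obstacle is conceptual rather than computational: one has to correctly couple the backward structural property (ii) with the conservation law (iii). Without (ii), there is no reason for the factor $g(\gamma_i(t))$ appearing in $h$ to be common on the collision group $S_\ell$, and without that common factor one cannot pull it outside the sum in order to apply momentum conservation. Once this coupling is recognized, the argument reduces to bookkeeping over the finitely many collision times, and the possibility of several spatially disjoint collisions occurring simultaneously at a single time $\tau_j$ is handled by treating each group $S_\ell$ independently.
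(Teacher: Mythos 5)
Your proof is correct: the paper itself omits the argument (deferring to Proposition 2.5 of \cite{Hynd}), and your route---showing $\tau\mapsto\sum_i m_i g(\gamma_i(t))\dot\gamma_i(\tau+)$ is piecewise constant and jump-free by combining the momentum identity from property $(iii)$ of Proposition \ref{StickyParticlesExist} with the forward-sticking property $(ii)$ to factor out a common value $g(\gamma_i(t))$ on each maximal co-located group---is exactly the intended mechanism. The only cosmetic point is that a maximal co-located group may also contain particles whose slope does not change at $\tau$; it is cleanest to take $S_\ell$ to be the full maximal group (as property $(iii)$ requires), since those extra particles contribute nothing to the jump.
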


\subsection{Two estimates}
We will now derive some estimates on $\gamma_i(t)-\gamma_j(t)$ in terms of the given initial data.  We will start with an elementary lemma. 

\begin{lem}\label{YConcaveLemma}
Suppose $T>0$ and $y:[0,T)\rightarrow \R$ is continuous and piecewise linear. Further assume 
\be\label{slopeDecreaseY}
\dot y(t+)\le \dot y(t-)
\ee
for each $t\in (0,T)$.  Then 
\be\label{yConcave0Tee}
y(t)\le y(0)+t\dot y(0+).
\ee
for $t\in [0,T)$.
\end{lem}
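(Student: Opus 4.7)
My plan is to exploit the piecewise linear structure of $y$ directly. Since $y$ is continuous and piecewise linear on $[0,T)$, there is a locally finite partition $0 = t_0 < t_1 < t_2 < \cdots$ of $[0,T)$ such that $y$ is affine on each $[t_k, t_{k+1}]$ with some slope $s_k$. Then $\dot y(t+) = s_k$ for every $t \in [t_k, t_{k+1})$, and at each breakpoint $t_k$ with $k \ge 1$ one has $\dot y(t_k-) = s_{k-1}$. The hypothesis \eqref{slopeDecreaseY} applied at $t_k$ therefore reads $s_k \le s_{k-1}$, and iterating yields $s_k \le s_0 = \dot y(0+)$ for every $k \ge 0$.

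Given any $t \in [0,T)$, I would pick the unique index $k$ with $t \in [t_k, t_{k+1})$ and telescope along the affine pieces:
\begin{align*}
y(t) - y(0) &= \sum_{j=0}^{k-1} s_j (t_{j+1} - t_j) + s_k (t - t_k) \\
&\le \dot y(0+) \left( \sum_{j=0}^{k-1} (t_{j+1} - t_j) + (t - t_k) \right) \\
&= t\, \dot y(0+),
\end{align*}
which is exactly \eqref{yConcave0Tee}. An equivalent route is to observe that $y$ is absolutely continuous on every compact subinterval of $[0,T)$ with right derivative equal to the step function $s \mapsto \dot y(s+)$, whence $y(t) - y(0) = \int_0^t \dot y(s+)\, ds$, and then to integrate the pointwise bound $\dot y(s+) \le \dot y(0+)$ that the previous paragraph supplies.

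There is no serious obstacle here. The only point worth checking is the standard convention that a continuous piecewise linear function on $[0,T)$ admits a locally finite set of breakpoints in $[0,T)$; once that is granted, the argument reduces to a one-line telescoping sum (or a one-line integration), and the proof is essentially immediate.
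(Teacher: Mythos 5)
Your proof is correct, and it takes a genuinely more elementary route than the paper's. The paper proves the lemma by showing that $y$ is concave in the distributional sense — integrating twice by parts against a nonnegative test function $\phi$ to get $\int_0^T \ddot\phi\, y\, dt \le 0$, then mollifying to produce smooth concave approximations $y^\epsilon$ with $\ddot y^\epsilon \le 0$ and passing to the limit — and only then reads off \eqref{yConcave0Tee} from concavity. You instead observe directly that the hypothesis \eqref{slopeDecreaseY}, evaluated at the breakpoints, forces the sequence of slopes $s_k$ to be nonincreasing, hence all bounded by $s_0 = \dot y(0+)$, and a telescoping sum finishes the job. For a piecewise linear function this is the shorter and more transparent argument (and your slope monotonicity in fact yields concavity too, so nothing is lost); the paper's mollification argument is the more robust template, since it would survive weakening "piecewise linear" to something like "continuous with nonpositive distributional second derivative." The only convention to fix, as you note, is that the breakpoints are locally finite in $[0,T)$ — the paper actually assumes a finite partition $0=t_0<\dots<t_n=T$, so your version is if anything slightly more general. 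No gaps.
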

\begin{proof}
Choose times $0=t_0<\dots <t_n=T$ such that $y$ is linear on each of the intervals $(0,t_1), \dots, (t_{n-1},T)$.
For $\phi \in C^\infty_c(0,T)$, we integrate by parts and compute 
\begin{align*}
\int^T_0\ddot \phi(t) y(t)dt&=\sum^{n-1}_{i=1}\phi(t_i)\left(\dot y(t_i+)-\dot y(t_i-)\right)+\sum^{n-1}_{i=1}\int^{t_{i+1}}_{t_i} \phi(t) \ddot y(t)dt\\
&=\sum^{n-1}_{i=1}\phi(t_i)\left(\dot y(t_i+)-\dot y(t_i-)\right).
\end{align*}
Thus, 
\be\label{ConcaveinDist}
\int^T_0\ddot \phi(t) y(t)dt\le 0
\ee
for $\phi\ge 0$. 

\par Now let $\eta\in C^\infty_c(\R)$ be a standard mollifier. That is, 
$$
\begin{cases}
\eta(z)=\eta(-z)\ge 0\\
\int_{\R}\eta dz=1\\
\text{supp}(\eta)\subset [-1,1].
\end{cases}
$$ 
Set 
$$
\eta^\epsilon:=\frac{1}{\epsilon}\eta\left(\frac{\cdot}{\epsilon}\right)
$$
and define
$$
y^\epsilon(s)=\int^T_{0}\eta^\epsilon(s-t)y(t)dt
$$
for $s\in (\epsilon, T-\epsilon)$ and $0<\epsilon<T$.  Observe that $y^\epsilon$ is smooth and $\eta^\epsilon(s-\cdot)\in C^\infty_c(0,T)$ for $s\in (\epsilon, T-\epsilon)$. By \eqref{ConcaveinDist},
$$
\ddot y^\epsilon(s)=\int^T_{0}\ddot \eta^\epsilon(s-t)y(t)dt\le 0.
$$
Therefore, $y^\epsilon$ is concave on $(\epsilon, T-\epsilon)$ for any $0<\epsilon<T$. It is routine to check that $y^\epsilon(t)\rightarrow y(t)$ for each $t\in (0,T)$. As a result, $y$ is concave on $[0,T)$ and we conclude \eqref{yConcave0Tee}.  
\end{proof}
The main application of Lemma \ref{YConcaveLemma} is the following proposition. It will later provide us with a modulus of continuity estimate for solutions of \eqref{FlowEqn}. 

\begin{prop}\label{ContPropgamma}
Suppose $i,j\in\{1,\dots, N\}$, $x_i\ge x_j$ and $t\ge 0$. Then 
\be\label{timeZeroEst}
0\le \gamma_i(t)-\gamma_j(t)\le x_i-x_j+t\sum^{n-1}_{\ell=1}|v_{k_{\ell+1}}-v_{k_\ell}|
\ee
where $k_1,\dots,k_n\in \{1,\dots, N\}$ are chosen so that 
$$
x_j=x_{k_1}<\dots<x_{k_n}=x_i.
$$
\end{prop}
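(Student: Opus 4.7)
The plan is to handle the two inequalities separately. The left inequality $0\leq \gamma_i(t)-\gamma_j(t)$ is an order preservation statement: if $x_i\geq x_j$ then $\gamma_i(t)\geq \gamma_j(t)$ for all $t\geq 0$. This follows from continuity together with part (ii) of Proposition \ref{StickyParticlesExist}, since as long as $\gamma_i(s)>\gamma_j(s)$ the difference remains strictly positive in a neighborhood, and once the trajectories meet they are glued for all later times.

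For the right inequality, the approach is to reduce to pairs of particles with adjacent initial positions and then telescope. Interpreting $k_1,\dots,k_n$ as the indices of all particles whose initial positions lie in $[x_j,x_i]$, the identity
\[
\gamma_i(t)-\gamma_j(t)=\sum_{\ell=1}^{n-1}\bigl(\gamma_{k_{\ell+1}}(t)-\gamma_{k_\ell}(t)\bigr)
\]
reduces the problem to proving, for two particles $\gamma_p,\gamma_q$ with $x_p>x_q$ and no other $x_m$ strictly between $x_q$ and $x_p$, the estimate
\[
\gamma_p(t)-\gamma_q(t)\leq(x_p-x_q)+t|v_p-v_q|.
\]

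To establish this pairwise bound, I set $y(t):=\gamma_p(t)-\gamma_q(t)$ and let $T^*\in(0,\infty]$ be the first time the two trajectories meet. For $t\geq T^*$ the bound holds trivially since $y(t)=0$. For $t\in[0,T^*)$, the plan is to verify that $\dot y(t+)\leq \dot y(t-)$ at every breakpoint $t\in(0,T^*)$ and then invoke Lemma \ref{YConcaveLemma} on any interval $[0,T)$ with $T\leq T^*$ to conclude
\[
y(t)\leq y(0)+t\,\dot y(0+)=(x_p-x_q)+t(v_p-v_q)\leq (x_p-x_q)+t|v_p-v_q|.
\]

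The main obstacle is verifying the slope-decrease condition $\dot y(t+)\leq \dot y(t-)$ at $t\in(0,T^*)$, and this is where the adjacency of $x_p$ and $x_q$ is essential. At such a $t$, $\gamma_p$ and $\gamma_q$ do not collide with each other, but each may independently participate in a merger. Using order preservation, any sub-cluster merging at $\gamma_p(t)$ cannot contain $\gamma_q$ or any particle with initial position $\leq x_q$; by adjacency there are no initial positions strictly between $x_q$ and $x_p$, so the merging group consists entirely of particles with initial position $\geq x_p$. Order preservation then forces $\gamma_p$ to sit in the leftmost, and (by the geometry of simultaneous arrival at a common point) hence fastest, sub-cluster immediately before $t$, so by part (iii) of Proposition \ref{StickyParticlesExist} its post-collision velocity, being a mass-weighted average of the merging velocities, satisfies $\dot\gamma_p(t+)\leq \dot\gamma_p(t-)$. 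A symmetric argument applied to $\gamma_q$ gives $\dot\gamma_q(t+)\geq \dot\gamma_q(t-)$, and the slope-decrease for $y$ follows in every case.
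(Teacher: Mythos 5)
Your proof is correct and takes essentially the same route as the paper: reduce to adjacent initial positions by telescoping, set $y=\gamma_p-\gamma_q$ up to the first meeting time $T^*$, verify $\dot y(t+)\le\dot y(t-)$ at collisions by observing that (using adjacency and $t<T^*$) every particle clustering with $\gamma_p$ lies above it and thus has a smaller left slope, so the averaging in part (iii) of Proposition \ref{StickyParticlesExist} yields $\dot\gamma_p(t+)\le\dot\gamma_p(t-)$ and symmetrically $\dot\gamma_q(t+)\ge\dot\gamma_q(t-)$, then conclude with Lemma \ref{YConcaveLemma}. Your explicit treatment of $t\ge T^*$ and of the left inequality via order preservation merely spells out what the paper leaves implicit.
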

\begin{proof}
1. We suppose $x_1\le\dots\le x_N$ so that $\gamma_1\le \dots\le\gamma_N$. With this assumption, it suffices to show 
\be\label{timeZeroEstiplusone}
\gamma_{i+1}(t)-\gamma_i(t)\le x_{i+1}-x_i+t|v_{i+1}-v_i|
\ee
for $t\ge 0$.  Because if $j,k\in \{1,\dots, N\}$ with $k>j$, then
\begin{align}
\gamma_k(t)-\gamma_j(t)&= \sum^{k-1}_{i=j}(\gamma_{i+1}(t)-\gamma_i(t))\\
&\le \sum^{k-1}_{i=j}\left(x_{i+1}-x_i+t|v_{i+1}-v_i|\right)
\\
&= x_k-x_j+t\sum^{k-1}_{i=j}|v_{i+1}-v_i|.
\end{align}

\par With the goal of verifying \eqref{timeZeroEstiplusone} in mind, we fix $i\in\{1,\dots, N\}$ and define
$$
T:=\inf\{t\ge 0: \gamma_{i+1}(t)-\gamma_i(t)=0\}.
$$
In order to prove \eqref{timeZeroEstiplusone}, it then suffices to show
\be\label{LasttimeZeroEst}
\gamma_{i+1}(t)-\gamma_i(t)\le x_{i+1}-x_i+t(v_{i+1}-v_i),\quad t\in [0,T].
\ee
We will do this by applying to the previous lemma to 
$$
y(t):=\gamma_{i+1}(t)-\gamma_i(t), \quad t\in [0,T).
$$
We already know that $y$ is continuous and piecewise linear. Let us now focus on showing
\be\label{DotGmamaiplusone}
\dot\gamma_{i+1}(s+)\le \dot\gamma_{i+1}(s-).
\ee

\par  2. Observe that if $\gamma_{i+1}$ does not have a first intersection time at $s\in (0,T)$, then $\gamma_{i+1}$ is linear near $s$ and so 
$$
\dot\gamma_{i+1}(s)=\dot\gamma_{i+1}(s+)=\dot\gamma_{i+1}(s-).
$$
Alternatively, if $\gamma_{i+1}$ has a first intersection time at $s$ there are trajectories $\gamma_{i+2}, \dots, \gamma_{i+r}$  (some $r\ge 2$) such that 
$$
\gamma_{i+1}(s)=\gamma_{i+2}(s)=\dots=\gamma_{i+r}(s)
$$
and 
\be\label{AveragingForQSPS}
\dot\gamma_{i+j}(s+)=\frac{m_{i+1}\dot\gamma_{i+1}(s-)+\dots + m_{i+r}\dot\gamma_{i+r}(s-)}{m_{i+1}+\dots+m_{i+r}}
\ee
$j=1,\dots, r$. Recall part $(iii)$ of Proposition \ref{StickyParticlesExist}.

\par Also observe that since $\gamma_{i+1}\le \gamma_{i+j}$ for $j=2,\dots, r$,
$$
\frac{\gamma_{i+1}(s+h)-\gamma_{i+1}(s)}{h}\ge \frac{\gamma_{i+j}(s+h)-\gamma_{i+j}(s)}{h}
$$
for all $h<0$ and close enough to 0. It follows from Remark \ref{interchangeSlopeLimit} that
$$
\dot\gamma_{i+1}(s-)\ge \dot\gamma_{i+j}(s-).
$$
It view of \eqref{AveragingForQSPS} 
 $$
\dot \gamma_{i+1}(s+)\le\frac{m_{i+1}\dot\gamma_{i+1}(s-)+\dots + m_{i+r}\dot\gamma_{i+1}(s-)}{m_{i+1}+\dots+m_{i+r}}=\dot\gamma_{i+1}(s-),
 $$
  which is \eqref{DotGmamaiplusone}. A similar argument gives 
\be\label{DotGmamajusti}
\dot\gamma_{i}(s+)\ge \dot\gamma_{i}(s-)
\ee
for each $s\in (0,T)$. Combining \eqref{DotGmamaiplusone} and \eqref{DotGmamajusti} 
 \be\label{dotYnotgoingup}
 \dot y(s+)=\dot\gamma_{i+1}(s+)-\dot\gamma_{i}(s+)\le \dot\gamma_{i+1}(s-)-\dot\gamma_{i}(s-)= \dot y(s-)
 \ee
for all $s\in (0,T)$.  We then conclude \eqref{LasttimeZeroEst} by appealing to Lemma \ref{YConcaveLemma}. 
\end{proof}
\begin{rem}\label{EstimateFailsForNonMonv}
We can infer from proof of Proposition \ref{ContPropgamma} that if $x_1\le \dots \le x_N$ and $v_1\le \dots \le v_N$, then \eqref{timeZeroEst} can be improved to 
\be\label{timeZeroEst2}
0\le \gamma_i(t)-\gamma_j(t)\le x_i-x_j+t|v_{i}-v_{j}|
\ee
for $i\ge j$ and $t\ge 0$. However, if $v_1,\dots, v_N$ are not nondecreasing then this estimate fails to be true.   To see this, let us consider the example of three particles each with mass equal to 1/3, and with respective initial positions
$$
x_1=0, x_2=1,x_3=2,
$$
and the initial velocities
$$
v_1=1, v_2=0,v_3=1.
$$
\begin{figure}[h]
\centering
 \includegraphics[width=.65\textwidth]{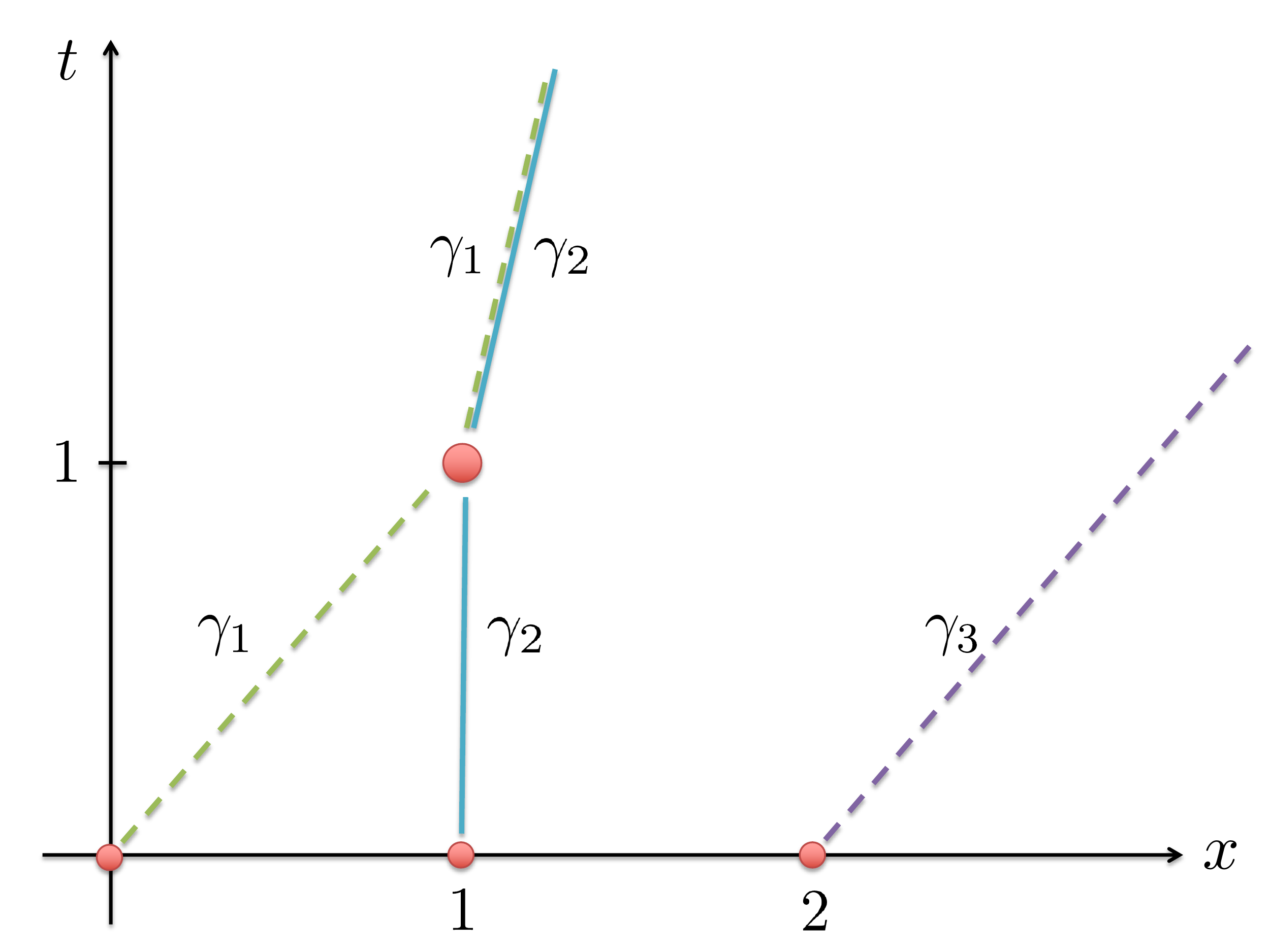}
 \caption{Three sticky particle trajectories $\gamma_1, \gamma_2, \gamma_3: [0,\infty)\rightarrow \R$ in which $|\gamma_1(t)-\gamma_3(t)|>|x_3-x_1|+t|v_3-v_1|$ for $t>1$. }\label{Fig3}
\end{figure}
\par 
The corresponding sticky particle trajectories for $\gamma_1$ are and $\gamma_3$ are
$$
\gamma_1(t)=
\begin{cases}
t,\quad &0\le t\le 1\\
1+\frac{1}{2}(t-1),\quad &t\ge 1
\end{cases}
$$
and $\gamma_3(t)=2+t$.  Observe that for $t> 1$ 
\begin{align*}
\gamma_3(t)-\gamma_1(t)&=2+t - \left(1+\frac{1}{2}(t-1)\right)\\
&=2+\frac{1}{2}(t-1)\\
&>2\\
&=x_3-x_1+t|v_3-v_1|.
\end{align*}
See Figure \ref{Fig3}. 
\end{rem}
We call the following assertion the {\it quantitative sticky particle property} as it quantifies part $(ii)$ of Proposition \ref{StickyParticlesExist}. 

\begin{prop}\label{PropQSPP}
For each $i,j=1,\dots,N$ and $0<s\le t$,
\be
\frac{1}{t}|\gamma_i(t)-\gamma_j(t)|
\le \frac{1}{s}|\gamma_i(s)-\gamma_j(s)|.
\ee
\end{prop}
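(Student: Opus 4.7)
The plan is to reduce the estimate to the case of adjacent particles, where the concavity argument from the proof of Proposition \ref{ContPropgamma} can be pushed slightly further. After relabeling so that $x_1 < x_2 < \cdots < x_N$, part $(ii)$ of Proposition \ref{StickyParticlesExist} guarantees $\gamma_1(t) \le \gamma_2(t) \le \cdots \le \gamma_N(t)$ for all $t \ge 0$. Since the asserted inequality is symmetric in $i$ and $j$, I may take $i > j$, so that the absolute values drop and I can telescope,
$$
\frac{\gamma_i(t) - \gamma_j(t)}{t} \;=\; \sum_{l=j}^{i-1} \frac{\gamma_{l+1}(t) - \gamma_l(t)}{t}.
$$
A finite sum of non-increasing functions of $t$ is non-increasing, so it suffices to prove the claim for each adjacent pair $l,l+1$.

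For a fixed $l$, set $y(t) := \gamma_{l+1}(t) - \gamma_l(t) \ge 0$ and let $T := \inf\{t \ge 0 : y(t) = 0\} \in (0,\infty]$. By part $(ii)$ of Proposition \ref{StickyParticlesExist}, $y \equiv 0$ on $[T,\infty)$. The slope analysis in the proof of Proposition \ref{ContPropgamma} yields $\dot y(s+) \le \dot y(s-)$ for each $s \in (0,T)$, so Lemma \ref{YConcaveLemma} applied on every compact subinterval of $[0,T)$ gives that $y$ is concave on $[0,T)$.

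The key step is then to observe that concavity of $y$ on $[0,T)$, together with $y(0) = x_{l+1} - x_l \ge 0$, forces $t \mapsto y(t)/t$ to be non-increasing on $(0,T)$. For $0 < s \le t < T$, writing $s = (s/t)\cdot t + (1 - s/t)\cdot 0$ and applying the concavity inequality gives
$$
y(s) \;\ge\; \tfrac{s}{t}\, y(t) + \left(1 - \tfrac{s}{t}\right) y(0) \;\ge\; \tfrac{s}{t}\, y(t),
$$
which rearranges to $y(s)/s \ge y(t)/t$. For $0 < s < T \le t$, $y(t) = 0$ yields $y(t)/t = 0 \le y(s)/s$, and for $T \le s \le t$ both ratios vanish; continuity of $y$ at $T$ patches these regimes together, giving $y/t$ non-increasing on all of $(0,\infty)$. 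Combined with the telescoping reduction above, this proves the proposition.

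The only nontrivial point is that $y = \gamma_{l+1} - \gamma_l$ need not be concave globally on $[0,\infty)$---at the extinction time $T$ its slope can jump upward from a strictly negative value to zero, as the two particles lock together---so the argument really does need the two pieces ``concave on $[0,T)$'' and ``identically zero on $[T,\infty)$'' to be glued by continuity. Once that is noted, the link between concavity with $y(0) \ge 0$ and monotonicity of the ratio $y(t)/t$ is a one-line consequence of the concavity inequality, and the reduction to adjacent pairs via telescoping is routine bookkeeping.
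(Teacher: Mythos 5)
Your proof is correct, and while the outer skeleton matches the paper's (relabel so the trajectories are ordered, telescope over adjacent pairs, set $y=\gamma_{l+1}-\gamma_l$ and $T=\inf\{t:y(t)=0\}$, import the slope inequality $\dot y(s+)\le\dot y(s-)$ from the proof of Proposition \ref{ContPropgamma}, and glue the regimes $t<T$ and $t\ge T$ by continuity), the decisive step is genuinely different. The paper proves a separate Lemma \ref{slopeDecreaseYLemma}: it sets $u(t)=y(t)/t$, shows $\frac{d}{dt}\bigl(\dot u(t)t^2\bigr)=0$ off the breakpoints, checks $\limsup_{s\to 0^+}\dot u(s)s^2\le 0$, and propagates $\dot u\le 0$ interval by interval. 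You instead observe that the slope condition makes $y$ concave on $[0,T)$ (this is exactly what the mollification argument inside Lemma \ref{YConcaveLemma} establishes, and for a continuous piecewise linear function it is immediate from the non-increasing slopes) and then use the one-line fact that a concave function with $y(0)\ge 0$ has non-increasing chord slope from the origin: $y(s)\ge \frac{s}{t}y(t)+\bigl(1-\frac{s}{t}\bigr)y(0)\ge \frac{s}{t}y(t)$. This buys a shorter and more transparent argument that avoids the paper's auxiliary lemma entirely, at the small cost of needing concavity rather than just the tangent-line estimate that Lemma \ref{YConcaveLemma} states. One precision point: as stated, Lemma \ref{YConcaveLemma} only asserts $y(t)\le y(0)+t\dot y(0+)$; applying it with shifted start points gives that $y$ lies below each of its right tangents, which for a piecewise linear function is indeed equivalent to concavity, but it would be cleaner to either cite the concavity established inside that lemma's proof or note directly that a continuous piecewise linear function whose slope never increases across a breakpoint is concave. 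Your handling of the extinction time $T$ (including the remark that $y$ need not be concave across $T$) is accurate and mirrors the paper's use of part $(ii)$ of Proposition \ref{StickyParticlesExist}.
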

We will see that this proposition is a simple consequence of the following lemma. 
\begin{lem}\label{slopeDecreaseYLemma}
Suppose $T>0$ and $y:[0,T)\rightarrow [0,\infty)$ is continuous and piecewise linear. Further assume 
\be\label{slopeDecreaseY}
\dot y(t+)\le \dot y(t-)
\ee
for each $t\in (0,T)$.  Then 
\be\label{yoverteeNonincrease}
\frac{1}{t}y(t)\le\frac{1}{s}y(s) 
\ee
for $0<s\le t<T$.
\end{lem}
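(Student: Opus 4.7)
The plan is to reduce the claim to a statement about concave functions. Inspecting the proof of Lemma \ref{YConcaveLemma} shows that the mollification argument actually establishes something stronger than the stated conclusion: the hypothesis $\dot y(t+)\le \dot y(t-)$ implies that the smoothed function $y^{\epsilon}$ has $\ddot y^{\epsilon}\le 0$, so $y^{\epsilon}$ is concave on $(\epsilon,T-\epsilon)$, and passing to the limit shows that $y$ itself is concave on $[0,T)$. The bound $y(t)\le y(0)+t\dot y(0+)$ in Lemma \ref{YConcaveLemma} is just the tangent-line estimate at $t=0$. So as a first step I would point out (or re-derive in one line) that $y$ is concave on $[0,T)$.

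Given concavity together with the nonnegativity hypothesis $y\ge 0$, the desired estimate is a standard one-liner. Fix $0<s\le t<T$ and write $s$ as the convex combination $s=\tfrac{s}{t}\cdot t+\bigl(1-\tfrac{s}{t}\bigr)\cdot 0$. Concavity of $y$ on $[0,T)$ gives
\begin{equation*}
y(s)\;\ge\;\frac{s}{t}\,y(t)+\left(1-\frac{s}{t}\right)y(0)\;\ge\;\frac{s}{t}\,y(t),
\end{equation*}
where the last inequality uses $y(0)\ge 0$. Dividing by $s>0$ yields $y(s)/s\ge y(t)/t$, which is exactly \eqref{yoverteeNonincrease}.

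There is essentially no main obstacle: the only substantive ingredient is that the slope-decrease hypothesis implies concavity, which is already present in the proof of Lemma \ref{YConcaveLemma}. The role of the new hypothesis $y\ge 0$ (absent in Lemma \ref{YConcaveLemma}) is precisely to discard the $(1-s/t)y(0)$ term, and this is the only place it is used. In the write-up I would either cite the concavity conclusion implicit in the proof of Lemma \ref{YConcaveLemma}, or briefly reproduce the mollification step to have a self-contained argument, and then present the two-line convex combination computation above.
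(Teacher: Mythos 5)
Your proof is correct, but it follows a genuinely different route from the paper's. The paper proves the lemma directly: it sets $u(t)=y(t)/t$, notes that on each maximal interval of linearity $\frac{d}{dt}\bigl(\dot u(t)t^2\bigr)=0$, checks that $\limsup_{s\to 0^+}\dot u(s)s^2\le 0$ using $y\ge 0$, and then propagates the sign of $\dot u$ across the breakpoints via $\dot u(t+)\le\dot u(t-)$ (which follows from $\dot y(t+)\le\dot y(t-)$), concluding interval by interval that $u$ is nonincreasing. You instead observe that the mollification argument inside the proof of Lemma \ref{YConcaveLemma} already yields concavity of $y$ on $[0,T)$ (a fact the paper states there explicitly, even though the lemma's conclusion only records the tangent-line bound), and then finish with the one-line convex-combination estimate
\begin{equation}
y(s)\ge \frac{s}{t}\,y(t)+\Bigl(1-\frac{s}{t}\Bigr)y(0)\ge \frac{s}{t}\,y(t),
\end{equation}
using $y(0)\ge 0$. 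Both arguments use the same two ingredients (slope decrease and nonnegativity), but your version isolates their roles cleanly -- slope decrease gives concavity, nonnegativity only discards the $y(0)$ term -- and is shorter; its only cost is that it leans on a fact proved but not stated in Lemma \ref{YConcaveLemma}, so for a self-contained write-up you should either restate that lemma with concavity as the conclusion or briefly repeat the mollification step, as you indicate. The paper's computation, by contrast, is self-contained and never needs the word ``concave,'' at the price of the more delicate bookkeeping with $\dot u(t)t^2$.
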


\begin{proof} Let $0<t_0<\dots < t_n<T$ be such that $y$ is linear on each of the intervals 
$(0,t_1),\dots (t_n,T)$.  It then suffices to show  
$$
u(t):=\frac{y(t)}{t}, \quad t\in (0,T)
$$
is nonincreasing on each of these intervals. First observe
\begin{align*}
\dot u(t+)&=\frac{\dot y(t+)}{t}-\frac{y(t)}{t^2}\\
&\le \frac{\dot y(t-)}{t}-\frac{y(t)}{t^2}\\
&=\dot u(t-)
\end{align*}
for each $t>0$ by \eqref{slopeDecreaseY}. Also note 
\be
\ddot y(t)  = t\ddot u(t)+2\dot u(t)= 0
\ee
for $t\in (0,T)\setminus\{t_1,\dots, t_n\}$. Consequently, 
\be\label{UdotLessThan}
\frac{d}{dt}\left(\dot u(t)t^2\right)=t\left(\ddot u(t)t+2\dot u(t)\right)= 0
\ee
for $t\in (0,T)\setminus\{t_1,\dots, t_n\}$. 

\par 
As $y$ is nonnegative, 
$$
\dot u(t)\le \frac{\dot y(t)}{t}
$$
for $t\in (0,T)\setminus\{t_1,\dots, t_n\}$.  As a result, 
$$
\limsup_{s\rightarrow 0^+}\left\{\dot u(s)s^2\right\}\le \limsup_{s\rightarrow 0^+}\left\{\dot y(s)s\right\}=\dot y(0+)0=0.
$$
In view of \eqref{UdotLessThan},
$$
\dot u(t)t^2= \limsup_{s\rightarrow 0^+}\left\{\dot u(s)s^2\right\}\le 0
$$
for $t\in (0,t_1)$.  Therefore, $\dot u(t)\le 0$ for $t\in (0,t_1)$. 

\par So far, we have shown that $u$ is nonincreasing on $t\in (0,t_1]$ which gives 
$$
\dot u(t_1-)\le 0.
$$
And by \eqref{UdotLessThan}, 
\begin{align*}
\dot u(t)t^2&\le \dot u(t_1+)t_1^2\\
&\le \dot u(t_1-)t_1^2\\
&\le 0
\end{align*}
for $t\in (t_1,t_2)$.  Thus, $u$ is nonincreasing on $[t_1,t_2]$ and 
$$
\dot u(t_2-)\le 0.
$$
Repeating this argument on $[t_2,t_3], [t_3,t_4],\dots, [t_n,T)$, we find $u$ is nonincreasing on $(0,T)$.
\end{proof}
\begin{proof}[Proof of Proposition \ref{PropQSPP}]
Without loss of generality, we may suppose $\gamma_1\le \dots\le \gamma_N$.  Then it suffices to show
\be\label{StepNeededQSPP}
\frac{1}{t}(\gamma_{i+1}(t)-\gamma_i(t))\le \frac{1}{s}(\gamma_{i+1}(s)-\gamma_i(s))
\ee
for each $i=1,\dots, N-1$ and $0<s\le t<\infty$.  In this case, we would have for $k>j$ 
\begin{align*}
\frac{1}{t}(\gamma_{k}(t)-\gamma_j(t))&=\sum^{k-1}_{i=j}\frac{1}{t}(\gamma_{i+1}(t)-\gamma_i(t))\\
&\le\sum^{k-1}_{i=j}\frac{1}{s}(\gamma_{i+1}(s)-\gamma_i(s))\\
&=\frac{1}{s}(\gamma_{k}(s)-\gamma_j(s)).
\end{align*}

\par As for \eqref{StepNeededQSPP}, we fix $i\in \{1,\dots, N-1\}$ and set 
$$
y(t):=\gamma_{i+1}(t)-\gamma_i(t), \quad t\in [0,T).
$$
Here 
$$
T:=\inf\{t\ge 0: y(t)=0\}.
$$
Observe that $y:[0,T)\rightarrow [0,\infty)$ is piecewise linear. Further, the proof of Proposition \ref{ContPropgamma}
gives 
$$
\dot y(t+)\le \dot y(t-).
$$ 
By Lemma \ref{slopeDecreaseYLemma}, 
$$
\frac{1}{t}(\gamma_{i+1}(t)-\gamma_i(t))=\frac{1}{t}y(t)\le\frac{1}{s}y(s)
=\frac{1}{s}(\gamma_{i+1}(s)-\gamma_i(s))
$$ 
for $0<s\le t<T$. Since $\gamma_{i+1}(\tau)-\gamma_i(\tau)=0$ for all $\tau\ge T$, we conclude \eqref{StepNeededQSPP} for all $0<s\le t<\infty$. 
\end{proof}
\begin{cor}\label{TransitionFunction}
For each $0<s\le t$ there is $f_{t,s}:\R\rightarrow \R$ such that 
$$
\gamma_i(t)=f_{t,s}(\gamma_i(s))
$$
for $i=1,\dots, N$ and 
\be\label{ftsLip}
|f_{t,s}(x)-f_{t,s}(y)|\le \frac{t}{s}|x-y|
\ee
for $x,y\in \R$.
\end{cor}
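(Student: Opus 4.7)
The plan is to first define $f_{t,s}$ on the finite set $\{\gamma_1(s),\dots,\gamma_N(s)\}$ by declaring $f_{t,s}(\gamma_i(s)):=\gamma_i(t)$, and then extend this partially defined map to a $(t/s)$-Lipschitz function on all of $\R$ by a standard construction. Well-definedness of the initial assignment follows immediately from part (ii) of Proposition \ref{StickyParticlesExist}: if $\gamma_i(s)=\gamma_j(s)$ for some $i\ne j$, then $\gamma_i(t)=\gamma_j(t)$ since $t\ge s$, so the value $f_{t,s}(\gamma_i(s))$ does not depend on the index $i$ chosen to represent that point.

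For the Lipschitz bound on this finite set, Proposition \ref{PropQSPP} gives exactly what is needed:
$$
\bigl|f_{t,s}(\gamma_i(s))-f_{t,s}(\gamma_j(s))\bigr|=|\gamma_i(t)-\gamma_j(t)|\le \frac{t}{s}|\gamma_i(s)-\gamma_j(s)|
$$
for all $i,j\in\{1,\dots,N\}$.

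To extend $f_{t,s}$ to all of $\R$ while preserving this Lipschitz constant, I would use the McShane extension, setting
$$
f_{t,s}(x):=\min_{1\le i\le N}\left[\gamma_i(t)+\frac{t}{s}|x-\gamma_i(s)|\right],\quad x\in\R.
$$
A routine check shows that for each $j$, the choice $i=j$ gives the value $\gamma_j(t)$, while any other $i$ gives, by the Step 2 bound,
$$
\gamma_i(t)+\frac{t}{s}|\gamma_j(s)-\gamma_i(s)|\ge \gamma_j(t),
$$
so $f_{t,s}(\gamma_j(s))=\gamma_j(t)$. The Lipschitz estimate $|f_{t,s}(x)-f_{t,s}(y)|\le (t/s)|x-y|$ is a straightforward consequence of the triangle inequality applied inside the minimum.

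There is no substantive obstacle here: the corollary is essentially a repackaging of Proposition \ref{PropQSPP} via the standard Lipschitz extension construction. The only point that requires mild care is verifying that the McShane formula genuinely recovers the prescribed values on the finite set, and this is immediate from the Step 2 estimate.
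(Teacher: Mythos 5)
Your argument is correct and coincides with the paper's own proof: well-definedness via part (ii) of Proposition \ref{StickyParticlesExist}, the Lipschitz bound on $\{\gamma_1(s),\dots,\gamma_N(s)\}$ from Proposition \ref{PropQSPP}, and the same McShane-type extension formula (the paper writes it with an infimum, which agrees with your minimum on a finite set). Your explicit check that the extension recovers the prescribed values is a small detail the paper leaves implicit, but otherwise the two proofs are identical.
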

\begin{proof}
Since the number of distinct elements of $\{\gamma_1(\tau),\dots,\gamma_N(\tau)\}$ is nonincreasing in $\tau\ge 0$, the function
$$
g_{t,s}: \{\gamma_1(s),\dots,\gamma_N(s)\}\rightarrow\{\gamma_1(t),\dots,\gamma_N(t)\}; \gamma_i(s)\mapsto \gamma_i(t)
$$
is well defined by part $(ii)$ of Proposition \ref{StickyParticlesExist}. Further, $\gamma_i(t)=g_{t,s}(\gamma_i(s))$ for $i=1,\dots, N$.
By Proposition \ref{PropQSPP}, $g_{t,s}$ satisfies \eqref{ftsLip} for $x,y\in\{\gamma_1(s),\dots,\gamma_N(s)\}$. We can then extend $g_{t,s}$ to all of $\R$ to obtain $f_{t,s}$. For example, we can take 
$$
f_{t,s}(x)=\inf\left\{g_{t,s}(\gamma_i(s))+\frac{t}{s}|x-\gamma_i(s)|: i=1,\dots, N\right\}.
$$
\end{proof}

\subsection{A trajectory map}
Let us define 
$$
X: \{x_1,\dots, x_N\}\times[0,\infty)\rightarrow \R; (x_i,t)\mapsto \gamma_i(t).
$$
For each $t\ge 0$, we will also set 
$$
X(t): \{x_1,\dots, x_N\}\rightarrow \R; x_i\mapsto \gamma_i(t)
$$
so that $$X(t)(x_i)=X(x_i,t)=\gamma_i(t)$$ for $i=1,\dots, N$.  This is a trajectory map associated with the sticky particle trajectories $\gamma_1,\dots,\gamma_N$.   

\par We will translate the properties we derived above for sticky particle trajectories in terms of $X$ and argue that $X$ is a solution of the sticky particle flow equation \eqref{FlowEqn}.  To this end, we set 
\be\label{rhozeroConvComb}
\rho_0:=\sum^N_{i=1}m_i\delta_{x_i}
\ee
and choose $v_0:\R\rightarrow \R$ absolutely continuous with 
$$
v_0(x_i)=v_i
$$
for $i=1,\dots, N$.  
\begin{prop}\label{DiscreteSolnProp}
The function $X$ has the following properties. 

\begin{enumerate}[(i)]

\item $X(0)=\textup{id}_\R$ and 
\be
\dot X(t)=\E_{\rho_0}[v_0| X(t)]
\ee
for all but finitely many $t\ge 0$. Both equalities hold on the support of $\rho_0$. 

\item For every $t,s\ge0$ with $s\le t$,
$$
\int_{\R}\frac{1}{2}\dot X(t+)^2d\rho_0\le \int_{\R}\frac{1}{2}\dot X(s+)^2d\rho_0\le \int_{\R}\frac{1}{2}v_0^2d\rho_0.
$$

\item $X: [0,\infty)\rightarrow L^2(\rho_0); t\mapsto X(t)$ is Lipschitz continuous.

\item For $t\ge 0$ and $y,z\in\textup{supp}(\rho_0)$ with $y\le z$, 
$$
0\le X(z,t)-X(y,t)\le z-y+t\int^z_y|v_0'(x)|dx.
$$

\item For each $0<s\le t$ and $y,z\in\textup{supp}(\rho_0)$
$$
\frac{1}{t}|X(y,t)-X(z,t)|\le \frac{1}{s}|X(y,s)-X(z,s)|.
$$

\item For each $0<s\le t$, there is a function $f_{t,s}:\R\rightarrow \R$ which satisfies the Lipschitz condition \eqref{ftsLip} 
and
$$
X(y,t)=f_{t,s}(X(y,s))
$$
for $y\in\textup{supp}(\rho_0)$.
\end{enumerate}

\end{prop}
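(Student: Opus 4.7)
My plan is to reduce each of the six assertions to facts already established in Section \ref{SPSsection} about the piecewise linear trajectories $\gamma_1,\dots,\gamma_N$, together with a short conditional expectation argument for the parts involving $\E_{\rho_0}[v_0\mid X(t)]$. A few preliminary observations do most of the bookkeeping. Since $\rho_0=\sum_i m_i\delta_{x_i}$, $\textup{supp}(\rho_0)=\{x_1,\dots,x_N\}$, and the identity $X(0)=\id$ on $\textup{supp}(\rho_0)$ is immediate from $\gamma_i(0)=x_i$. Each $\gamma_i$ is piecewise linear, so the finite union of their breakpoints is the exceptional set: off this set the classical derivative $\dot X(t)$ exists on $\textup{supp}(\rho_0)$ and equals $\dot X(t+)$. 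Finally, part (iii) of Proposition \ref{StickyParticlesExist} shows that whenever $\gamma_i(t)=\gamma_j(t)$ one has $\dot\gamma_i(t+)=\dot\gamma_j(t+)$, so the map $x_i\mapsto\dot\gamma_i(t+)$ factors as a Borel function of $X(t)$; this is the measurability prerequisite for the conditional expectation.

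To finish (i), I would verify the defining identity \eqref{IntegralCondExpCond} directly. Given Borel $h:\R\rightarrow\R$, Corollary \ref{averagingCorollary} with $g=h$ and $s=0$ yields
\[
\int_\R h(X(t))\,\dot X(t+)\,d\rho_0 = \sum_{i=1}^N m_i h(\gamma_i(t))\dot\gamma_i(t+) = \sum_{i=1}^N m_i h(\gamma_i(t))v_0(x_i) = \int_\R h(X(t))\,v_0\,d\rho_0,
\]
which together with the measurability observation above identifies $\dot X(t+)$ as $\E_{\rho_0}[v_0\mid X(t)]$.

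For (ii), the key ingredient is Corollary \ref{TransitionFunction}: it supplies the map $f_{t,s}$ with $X(t)=f_{t,s}(X(s))$, so the tower property \eqref{TowerProp} gives $\dot X(t+)=\E_{\rho_0}[\dot X(s+)\mid X(t)]$, and two applications of the Jensen-type inequality \eqref{JensenINeq} chain the energy estimates. For (iii), piecewise linearity allows me to write $\gamma_i(t)-\gamma_i(s)=\int_s^t\dot\gamma_i(\tau)\,d\tau$ for each $i$; squaring, applying Cauchy--Schwarz in $\tau$, summing against $m_i$, and using (ii) yields $\|X(t)-X(s)\|_{L^2(\rho_0)}\le|t-s|\,\|v_0\|_{L^2(\rho_0)}$.

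Parts (iv), (v), and (vi) are essentially restatements of Proposition \ref{ContPropgamma}, Proposition \ref{PropQSPP}, and Corollary \ref{TransitionFunction}, respectively. The only extra input appears in (iv): absolute continuity of $v_0$ lets me replace the sum $\sum_{\ell=1}^{n-1}|v_{k_{\ell+1}}-v_{k_\ell}|=\sum_{\ell=1}^{n-1}|v_0(x_{k_{\ell+1}})-v_0(x_{k_\ell})|$ by $\int_y^z|v_0'(x)|\,dx$ via the fundamental theorem for absolutely continuous functions. I expect the main conceptual step---though not a serious technical obstacle---to be the setup of (i): one must assemble both the measurability factoring through $X(t)$ (from the sticky collision rule) and the integral identity (from the averaging property) before the conditional-expectation identification becomes available. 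Once this dictionary is in place, the remaining items are clean translations of the results of Section \ref{SPSsection}.
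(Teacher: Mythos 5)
Your proposal is correct and follows essentially the same route as the paper: part (i) via the averaging property (Corollary \ref{averagingCorollary}) plus the observation that $\dot X(t+)$ factors through $X(t)$, part (iii) via the Cauchy--Schwarz estimate on $\int_s^t\dot X(\tau)\,d\tau$, and parts (iv)--(vi) as direct translations of Proposition \ref{ContPropgamma}, Proposition \ref{PropQSPP} and Corollary \ref{TransitionFunction}, with the telescoping sum bounded by $\int_y^z|v_0'(x)|\,dx$ using absolute continuity. The only deviation is in (ii), where you obtain $\dot X(t+)=\E_{\rho_0}[\dot X(s+)\mid X(t)]$ from the transition map $f_{t,s}$ and the tower property \eqref{TowerProp} rather than reading it directly off the averaging identity as the paper does; this works just as well, up to the trivial remark that $f_{t,s}$ is only provided for $s>0$, while the case $s=0$ reduces to part (i) since $\dot X(0+)=v_0$ on $\textup{supp}(\rho_0)$.
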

\begin{proof}
\underline{Part $(i)$}: As $X(x_i,0)=x_i$, it is clear that we have  
$$
X(0)=\id
$$
on the support of $\rho_0$. Furthermore, Corollary \ref{averagingCorollary} implies that if $g:\R\rightarrow \R$ and $s\le t$, then 
\begin{align*}
\int_{\R}g(X(t))\dot X(t+)d\rho_0&=\sum^N_{i=1}m_ig(\gamma_i(t))\dot\gamma_i(t+)\\
&=\sum^N_{i=1}m_ig(\gamma_i(t))\dot\gamma_i(s+)\\
&=\int_{\R}g(X(t))\dot X(s+)d\rho_0.
\end{align*}
In particular,  
$$
\int_{\R}g(X(t))\dot X(t)d\rho_0=\int_{\R}g(X(t)) v_0d\rho_0
$$
for all but finitely many $t\ge 0$. Also recall that 
$$
\dot X(t+)=v(X(t),t)
$$
on the support of $\rho_0$ for $t\ge 0$, where $v$ is defined in \eqref{discreteVee}. It follows that $\dot X(t)=\E_{\rho_0}[v_0| X(t)] $ for all but finitely many $t\ge 0$. 

\par \underline{Part $(ii)$ and $(iii)$}: Our proof of $(i)$ also shows that 
$$
\dot X(t+)=\E_{\rho_0}[\dot X(s+)| X(t)]
$$
and
$$
\dot X(s+)=\E_{\rho_0}[v_0| X(s)]
$$
for $0\le s\le t$. So part $(ii)$ follows from inequality \eqref{JensenINeq}. Moreover, for $s\le t$
\be\label{XLipschitzEst}
\int_{\R}(X(t)-X(s))^2d\rho_0\le (t-s)\int^t_s\int_{\R}\dot X(\tau)^2d\rho_0 d\tau\le (t-s)^2\int_{\R}v_0^2d\rho_0.
\ee
Therefore, $X:[0,\infty)\rightarrow L^2(\rho_0)$ is Lipschitz continuous. 

\par \underline{Part $(iv)$}: By part $(ii)$ of Proposition \ref{StickyParticlesExist}, $X(\cdot,t)$ is nondecreasing on the support of $\rho_0$.  In view of 
Proposition \ref{ContPropgamma}, we also have
\be
0\le X(x_i,t)-X(x_j,t)\le x_i-x_j+t\sum^{n-1}_{\ell=1}|v_0(x_{k_{\ell+1}})-v_0(x_{k_\ell})|
\ee
for $x_i\ge x_j$. Here $k_1,\dots,k_n\in \{1,\dots, N\}$ are chosen so that 
$$
x_j=x_{k_1}<\dots<x_{k_n}=x_i.
$$
Since $v_0$ is absolutely continuous, 
\be
\sum^{n-1}_{\ell=1}|v_0(x_{k_{\ell+1}})-v_0(x_{k_\ell})|\le 
\sum^{n-1}_{\ell=1}\int^{x_{k_{\ell+1}}}_{x_{k_\ell}}|v_0'(x)|dx =\int^{x_i}_{x_j}|v_0'(x)|dx.
\ee
We conclude part $(iv)$.
\par \underline{Part $(v)$ and $(vi)$}: Part $(v)$ follows from Proposition \ref{PropQSPP} and
part $(vi)$ is due to Corollary \ref{TransitionFunction}. 
\end{proof}
\begin{rem}\label{XdiscreteUnifContRem}
As $v_0:\R\rightarrow \R$ is absolutely continuous,  
$$
\omega(r):=\sup\left\{\int^b_a|v_0'(x)|dx \;:\; 0\le b-a\le r\right\}
$$
tends to $0$ as $r\rightarrow 0^+$.  It is also easy to check that $\omega$ is nondecreasing and sublinear, which implies that $\omega(r)$ grows at most linearly in $r$.  By part $(iv)$ of the above proposition, 
\be\label{XdiscreteUnifCont}
|X(y,t)-X(z,t)|\le |y-z|+t\omega(|y-z|)
\ee
for $y,z$ belonging to the support of $\rho_0$. Therefore, $X(t)$ is uniformly continuous on the support of $\rho_0$.  So we may extend $X(t)$ to obtain a uniformly continuous function on $\R$ which satisfies \eqref{XdiscreteUnifCont} and agrees with $X(t)$ on the support of $\rho_0$.  Consequently, we will identify $X(t)$ with this extension and consider $X(t)$ to be a uniformly continuous function on $\R$.
\end{rem}

\begin{rem}
The reader may wonder if the estimate 
$$
|X(z,t)-X(y,t)|\le |z-y|+t|v_0(y)-v_0(z)|
$$
holds for each $y,z$ belonging to the support of $\rho_0$. As we argued in Remark \ref{EstimateFailsForNonMonv}, such an estimate is only guaranteed to hold when $v_0$ is nonincreasing. 
\end{rem}

\section{Existence theory}\label{CompactSect}
Our goal in this section is to prove Theorem \ref{mainThm}.  So we will assume throughout that $\rho_0\in {\cal P}(\R)$ with 
$$
\int_{\R}x^2d\rho_0(x)<\infty
$$
and $v_0:\R\rightarrow \R$ absolutely continuous. We will also select a sequence $(\rho^k_0)_{k\in \N}\subset {\cal P}(\R)$ in which each $\rho^k_0$ is of the form \eqref{rhozeroConvComb}, $\rho^k_0\rightarrow \rho_0$  narrowly and 
\be\label{rhokconveinpee2}
\lim_{k\rightarrow\infty}\int_{\R}x^2d\rho^k_0(x)= \int_{\R}x^2d\rho_0(x)
\ee
(see \cite{Bolley} for a short proof of how this can be done).  In view of Proposition \ref{DiscreteSolnProp}, there is a mapping 
$$
X^k:\R\times[0,\infty)\rightarrow \R
$$
which satisfies the sticky particle flow equation  \eqref{FlowEqn} and the initial condition \eqref{FlowInit} with $\rho^k_0$ replacing $\rho_0$. In this section, we will show $(X^k)_{k\in \N}$ has a subsequence that converges in various senses to a solution of the sticky particle flow equation \eqref{FlowEqn} which satisfies the initial condition \eqref{FlowInit} for the given $\rho_0$.  Then we will
finally show how to use this solution to design a solution of the SPS \eqref{SPS} that fulfills the initial conditions \eqref{Init}.

\subsection{Compactness}
Theorem \ref{mainThm} will follow from two compactness lemmas for the sequence $(X^{k})_{k\in \N}$. The first asserts that $X^k(t)$ has a subsequence which converges in a strong sense for each $t\ge 0$.
\begin{lem}\label{FirstCompactnessXkayJay}
There is a subsequence $(X^{k_j})_{j\in \N}$ and a Lipschitz continuous mapping $X:[0,\infty)\rightarrow L^2(\rho_0); t\mapsto X(t)$ such that 
\be\label{strongConvXkayJay}
\lim_{j\rightarrow\infty}\int_{\R}h( \textup{id}_\R,X^{k_j}(t))d\rho^{k_j}_0=\int_{\R}h( \textup{id}_\R,X(t))d\rho_0
\ee
for each $t\ge 0$ and continuous $h:\R^2\rightarrow \R$ with 
$$
\sup_{(x,y)\in \R^2}\frac{|h(x,y)|}{1+x^2+y^2}<\infty.
$$
Moreover, $X$ has the following properties.
\begin{enumerate}[(i)]  

\item For $y,z\in \textup{supp}(\rho_0)$ with $y\le z$ and $t\ge 0$,

\be\label{XYinequality}
0\le X(z,t)-X(y,t)\le z-y+t\int^z_y|v_0'(x)|dx.
\ee

\item For $y,z\in \textup{supp}(\rho_0)$ and $0<s\le t$,

\be\label{QSPPXinequality}
\frac{1}{t}|X(y,t)-X(z,t)|\le \frac{1}{s}|X(y,s)-X(z,s)|.
\ee

\item  For each $0<s\le t$, is there is a function $f_{t,s}:\R\rightarrow \R$ which satisfies \eqref{ftsLip} and 
\be\label{MeasureabilityforlimitX}
X(y,t)=f_{t,s}(X(y,s))
\ee
for $y\in  \textup{supp}(\rho_0)$. 
\end{enumerate} 

\end{lem}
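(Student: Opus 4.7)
The plan is to realize each discrete trajectory map as a pushforward measure $\mu_t^k := (\mathrm{id}_{\R}, X^k(t))_{\#}\rho_0^k \in \mathcal{P}(\R^2)$, extract via Arzel\`a--Ascoli a subsequence for which $t \mapsto \mu_t^{k_j}$ converges uniformly on compact $t$-intervals to a Lipschitz curve $t \mapsto \mu_t$, then identify $\mu_t = (\mathrm{id}_{\R}, X(t))_{\#}\rho_0$ for a monotone function $X(\cdot,t)$, and finally pass Proposition \ref{DiscreteSolnProp} (iii)--(vi) to the limit to obtain (i)--(iii). As a first step I would establish the uniform moment bound $\sup_k\sup_{t\in[0,T]}\int(x^2+y^2)\,d\mu_t^k \le C_T$: since $v_0$ is absolutely continuous it has linear growth, so $\int v_0^2\,d\rho_0^k$ is uniformly bounded (by \eqref{rhokconveinpee2}), and combined with $\|X^k(t)-\mathrm{id}_{\R}\|_{L^2(\rho_0^k)} \le t\|v_0\|_{L^2(\rho_0^k)}$ from Proposition \ref{DiscreteSolnProp} (iii), this controls both marginals of $\mu_t^k$.

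Next I would show $t \mapsto \mu_t^k$ is uniformly Lipschitz in the narrow metric $\mathcal{d}$ of \eqref{NarrowMetric}: for each $h_j$ with $\mathrm{Lip}(h_j)\le 1$,
$$\Bigl|\int h_j\,d\mu_t^k - \int h_j\,d\mu_s^k\Bigr| \le \int |X^k(t)-X^k(s)|\,d\rho_0^k \le \|X^k(t)-X^k(s)\|_{L^2(\rho_0^k)} \le C|t-s|,$$
so $\mathcal{d}(\mu_t^k,\mu_s^k)\le C|t-s|$. Tightness of $\{\mu_t^k\}_k$ at each $t$ follows from Chebyshev and the moment bound, hence (by Prokhorov) they are narrowly relatively compact. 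A diagonal extraction at a countable dense set of times, combined with the uniform Lipschitz estimate above, produces a subsequence $(\mu^{k_j}_\cdot)$ converging in $\mathcal{d}$ uniformly on compact intervals to a Lipschitz curve $t\mapsto \mu_t$.

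The key step is to realize $\mu_t$ as $(\mathrm{id}_{\R}, X(t))_{\#}\rho_0$. The first marginal of $\mu_t$ is $\rho_0$ since it is the limit of $\rho_0^k$. Monotonicity of $X^k(\cdot,t)$ on $\mathrm{supp}(\rho_0^k)$ makes $\mathrm{supp}(\mu_t^k)$ a monotone subset of $\R^2$, and an elementary open-box argument with the narrow convergence shows this monotonicity is inherited by $\mathrm{supp}(\mu_t)$. Vertical segments in $\mathrm{supp}(\mu_t)$ above points of $\mathrm{supp}(\rho_0)$ are ruled out by the uniform modulus of continuity \eqref{XdiscreteUnifCont} of Remark \ref{XdiscreteUnifContRem}: particles starting within distance $r$ of each other remain within distance $r + t\omega(r)$ at time $t$. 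This forces the vertical slices to be singletons and defines $X(\cdot,t):\mathrm{supp}(\rho_0)\to\R$ uniquely modulo $\rho_0$-null sets, with $\mu_t = (\mathrm{id}_{\R}, X(t))_{\#}\rho_0$; the Lipschitz continuity $X:[0,\infty)\to L^2(\rho_0)$ follows from the corresponding Wasserstein estimate for $\mu_\cdot$.

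Properties (i) and (ii) then follow by selecting for each $y,z\in\mathrm{supp}(\rho_0)$ approximating sequences $y^{k_j},z^{k_j}\in\mathrm{supp}(\rho_0^{k_j})$ with $y^{k_j}\to y,\ z^{k_j}\to z$, applying Proposition \ref{DiscreteSolnProp} (iv) and (v), and passing to the limit using \eqref{XdiscreteUnifCont} to handle the approximation error. For (iii), the maps $f^{k_j}_{t,s}:\R\to\R$ from Proposition \ref{DiscreteSolnProp} (vi) have Lipschitz constant $t/s$ uniformly in $j$; a further Arzel\`a--Ascoli extraction (using pointwise boundedness at one well-chosen point of $\mathrm{supp}(\rho_0)$ via the moment estimates) yields a locally uniform limit $f_{t,s}$, and \eqref{MeasureabilityforlimitX} follows by passing to the limit in $X^{k_j}(y,t) = f^{k_j}_{t,s}(X^{k_j}(y,s))$. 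The upgrade of narrow convergence to the quadratic-growth convergence \eqref{strongConvXkayJay} will be accomplished via Lemma \ref{LemmaVariantNarrowCon}, which reduces the problem to uniform integrability of $1+x^2+y^2$ with respect to $(\mu_t^{k_j})_j$. I expect this uniform integrability to be the principal obstacle: establishing convergence of the second moment $\int (X^{k_j}(t))^2\,d\rho_0^{k_j}\to\int X(t)^2\,d\rho_0$ seems to require integrating the energy identity $\frac{d}{dt}\int(X^k)^2\,d\rho_0^k = 2\int X^k v_0\,d\rho_0^k$ (which follows from $\dot X^k = \mathbb{E}_{\rho_0^k}[v_0\,|\,X^k]$ and the tower property) and then bootstrapping from the already-established narrow convergence together with the linear growth of $v_0$.
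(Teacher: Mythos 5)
Your overall architecture (the pushforwards $\sigma^k_t=(\mathrm{id}_\R,X^k(t))_{\#}\rho^k_0$, uniform second moments, equi-Lipschitz continuity in the metric \eqref{NarrowMetric}, Arzel\`a--Ascoli in $t$, identification of the limit as a graph over $\rho_0$, and limit passage in Proposition \ref{DiscreteSolnProp}) matches the paper's, but the step you yourself defer as ``the principal obstacle'' --- upgrading narrow convergence to \eqref{strongConvXkayJay} --- is genuinely left open, and the energy-identity bootstrap you sketch does not close it as stated. Passing to the limit in $\int_0^t\iint v_0(x)\,y\,d\sigma^{k_j}_\tau\,d\tau$ is manageable, but to conclude $\int (X^{k_j}(t))^2d\rho^{k_j}_0\to\int X(t)^2d\rho_0$ you must then identify the resulting limit with $\int X(t)^2d\rho_0$, i.e.\ you need the energy identity $\frac{d}{dt}\int X(t)^2d\rho_0=2\int v_0\,X(t)\,d\rho_0$ for the \emph{limit} curve; that is the flow equation \eqref{FlowEqn} tested against $X(t)$, which is only established later (in the proof of Theorem \ref{mainThm}, using this very lemma). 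Without it, narrow convergence and lower semicontinuity give only $\liminf_j\int (X^{k_j}(t))^2d\rho^{k_j}_0\ge\int X(t)^2d\rho_0$, which is the wrong direction for uniform integrability of $y^2$. The paper avoids this entirely by a pointwise domination trick you do not use: integrating the modulus bound \eqref{XdiscreteUnifCont} in $z$ against $\rho^k_0$ gives $|X^k(y,t)|\le B(1+t)(1+|y|)$ (see \eqref{pointwiseEstimateXkay}); spatial Arzel\`a--Ascoli then yields local uniform convergence of $X^{k_j}(\cdot,t)$, and Lemma \ref{LemmaVariantNarrowCon} with the dominating function $B^2(1+t)^2(1+|y|)^2$ --- uniformly integrable with respect to $\rho^{k_j}_0$ by \eqref{rhokconveinpee2} --- delivers convergence of the second moments and hence \eqref{strongConvXkayJay}. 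A non-circular repair of your route exists (a tail version of the energy identity plus Gr\"onwall, or simply importing the paper's domination argument), but you have not supplied it.

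A second, related gap: your support-based identification of $\mu_t$ as the graph of $X(\cdot,t)$ does not by itself give the convergence $X^{k_j}(y^j,t)\to X(y,t)$ for $y^j\in\mathrm{supp}(\rho^{k_j}_0)$, $y^j\to y\in\mathrm{supp}(\rho_0)$ (the paper's \eqref{XkayjayConvUnifYes}), since limit points of supports of narrowly convergent measures need not lie in the support of the limit. This convergence is exactly what ``passing to the limit using \eqref{XdiscreteUnifCont} to handle the approximation error'' must mean when you transfer Proposition \ref{DiscreteSolnProp} (iv)--(vi) to statements (i)--(iii), and the same issue undercuts the one-line claim that $L^2(\rho_0)$-Lipschitz continuity of $t\mapsto X(t)$ ``follows from the corresponding Wasserstein estimate'': a $W_2$-optimal coupling of $\sigma_s$ and $\sigma_t$ need not match first coordinates, so the metric estimate alone yields continuity, not the Lipschitz bound \eqref{XLipschitzEst} in the limit. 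Both points are repairable inside your framework --- e.g.\ a box-mass argument (for $y\in\mathrm{supp}(\rho_0)$ a quantity of mass bounded below is carried to within $2\delta+t\omega(2\delta)$ of $X^{k_j}(y^j,t)$, forcing every limit point of $X^{k_j}(y^j,t)$ onto the graph near $X(y,t)$), or passing to the joint laws $(\mathrm{id}_\R,X^k(s),X^k(t))_{\#}\rho^k_0$ and using lower semicontinuity --- but these arguments are missing, whereas the paper obtains both facts at once from its spatial Arzel\`a--Ascoli step.
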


\begin{proof} 
\underline{Step 1:  ``narrow" convergence}. Inequality \eqref{XLipschitzEst} implies
\begin{align*}
\left(\int_{\R}X^k(t)^2d\rho^k_0\right)^{1/2}
&\le\left(\int_{\R}(X^k(t)-X^k(0))^2d\rho^k_0\right)^{1/2}+\left(\int_{\R}X^k(0)^2d\rho^k_0\right)^{1/2}\\
&\le t \left(\int_{\R}v_0^2d\rho^k_0\right)^{1/2}+\left(\int_{\R}x^2d\rho^k_0\right)^{1/2}.
\end{align*}
As $v_0$ is uniformly continuous on $\R$, $v_0$ grows at most linearly. Combining with  \eqref{rhokconveinpee2}, we find 
\be\label{XkayTeeBoundedL2}
\left(\int_{\R}X^k(t)^2d\rho^k_0\right)^{1/2}\le A(1+t)
\ee
for some constant $A>0$ independent of $k\in \N$ and for each $t\ge 0$.  For $k\in \N$, we also define $\sigma^k:[0,\infty)\rightarrow {\cal P}(\R^2); t\mapsto \sigma^k_t$ via the formula
$$
\sigma^k_t:=(\id,X^k(t))_{\#}\rho_0^k.
$$
Note that \eqref{rhokconveinpee2} and \eqref{XkayTeeBoundedL2} give
\be\label{sigmakaytTight}
\sup_{k\in \N}\iint_{\R^2}(x^2+y^2)d\sigma^k_t(x,y)<\infty
\ee
for each $t\ge 0$.  By criterion \eqref{prohorovCond}, $(\sigma^k_t)_{k\in \N}$ is narrowly precompact for 
each $t\ge 0$.

\par Also observe that for $h\in C^1(\R^2)$ and $s\le t$
\begin{align*}
\iint_{\R^2}h(x,y)d\sigma^k_t(x,y)-\iint_{\R^2}h(x,y)d\sigma^k_s(x,y)&=\int_{\R}h(\id,X^k(t))d\rho^k_0-\int_{\R}h(\id,X^k(s))d\rho^k_0\\
&=\int_{\R}[h(\id,X^k(t))-h(\id,X^k(s))]d\rho^k_0\\
&=\int_{\R}\int^t_s\partial_yh(\id,X^k(\tau))\dot X^k(\tau)d\tau d\rho^k_0\\
&=\int^t_s\int_{\R}\partial_yh(\id,X^k(\tau))\dot X^k(\tau) d\rho^k_0 d\tau\\
&\le \text{Lip}(h)\int^t_s\left(\int_{\R}|\dot X^k(\tau)| d\rho^k_0 \right)d\tau\\
&\le \text{Lip}(h)(t-s)\int_{\R}|v_0| d\rho^k_0\\
&\le C\text{Lip}(h)(t-s)
\end{align*}
for some constant $C$ independent of $k\in \N$. By mollifying $h$, it is routine to show
$$
\left|\iint_{\R^2}h(x,y)d\sigma^k_t(x,y)-\iint_{\R^2}h(x,y)d\sigma^k_s(x,y)\right|\le C\text{Lip}(h)(t-s)
$$
for Lipschitz continuous $h: \R^2\rightarrow \R$.  

\par Using the metric defined in \eqref{NarrowMetric}, which metrizes the narrow topology on ${\cal P}(\R^2)$, we additionally have 
$$
\mathcal{d}(\sigma^k_t,\sigma^k_s)\le C|t-s|
$$
for $t,s\ge 0$ and $k\in \N$. In summary, $(\sigma^k)_{k\in \N}$ is a uniformly equicontinuous family of mappings from $[0,\infty)$ into $({\cal P}(\R^2),\mathcal{d})$ which is also pointwise precompact. By the Arzel\`a-Ascoli theorem, there is a subsequence $(\sigma^{k_j})_{j\in N}$ and a narrowly continuous mapping $\sigma:[0,\infty)\rightarrow {\cal P}(\R^2)$ such that 
\be\label{sigmakayjayConvPointwise}
\sigma^{k_j}_t\rightarrow \sigma_t
\ee
narrowly in ${\cal P}(\R^2)$ for each $t\ge 0$. 

\par \underline{Step 2:  ``weak" convergence}. A direct consequence of \eqref{sigmakayjayConvPointwise} is 
$$
\iint_{\R^2}\phi(x)d\sigma_t(x,y)=
\lim_{j\rightarrow\infty}\iint_{\R^2}\phi(x)d\sigma^{k_j}_t(x,y)=\lim_{j\rightarrow\infty}\int_{\R}\phi(x)d\rho^{k_j}_0(x)=\int_{\R}\phi(x) d\rho_0(x)
$$
for $\phi\in C_b(\R)$. By the disintegration theorem (Theorem 5.3.1 of \cite{AGS}), there is a family of probability measures $(\zeta^x_t)_{x\in \R}\subset {\cal P}(\R)$ such that 
$$
\iint_{\R^2}h(x,y)d\sigma_t(x,y)=\int_{\R}\left(\int_{\R}h(x,y)d\zeta^x_t(y)\right)d\rho_0(x)
$$
for $h\in C_b(\R^2)$.  We define 
$$
X(x,t):=\int_{\R} yd\zeta^x_t(y)
$$
for $x\in \R$ and $t\ge 0$. 
 
\par In view of \eqref{sigmakaytTight}, $(x,y)\mapsto |y|$ is uniformly integrable with respect to $(\sigma^{k_j}_t)_{j\in \N}$. Indeed,
$$
\iint_{|y|\ge R}|y|d\sigma^{k_j}_t(x,y)\le \frac{1}{R}\iint_{|y|\ge R}y^2d\sigma^{k_j}_t(x,y)\le 
\frac{1}{R}\iint_{\R^2}(x^2+y^2)d\sigma^{k_j}_t(x,y)
$$
so that 
$$
\lim_{R\rightarrow \infty}\iint_{|y|\ge R}|y|d\sigma^{k_j}_t(x,y)=0
$$
uniformly in $j\in \N$. It follows that
\begin{align}\label{weakLimXkayJay}
\lim_{j\rightarrow\infty}\int_{\R}X^{k_j}(t)\;\phi d\rho^{k_j}_0
&=\lim_{j\rightarrow\infty}\iint_{\R^2}y\;\phi(x)d\sigma^{k_j}_t(x,y)\\
&=\iint_{\R^2}y\;\phi(x)d\sigma_t(x,y)\\
&=\int_{\R}\left(\int_{\R}y\;\phi(x)d\zeta^x_t(y)\right)d\rho_0(x)\\
&=\int_{\R} X(t)\;\phi d\rho_0
\end{align}
for $\phi\in C_b(\R)$ and each $t\ge 0$.

\par \underline{Step 3: ``strong" convergence}.  Fix $t\ge 0$. By Remark \ref{XdiscreteUnifContRem}, 
$$
|X^k(y,t)-X^k(z,t)|\le |y-z|+t\omega(|y-z|)
$$
for $y,z\in \R$. Moreover, 
\begin{align*}
|X^k(y,t)|&\le |X^k(y,t)-X^k(z,t)|+|X^k(z,t)|\\
&\le |y-z|+t\omega(|y-z|)+|X^k(z,t)|.
\end{align*}
Integrating over $z\in \R$ gives
$$
|X^k(y,t)|\le \int_{\R}\left(|y-z|+t\omega(|y-z|)+|X^k(z,t)|\right)d\rho^k_0(z).
$$
In view of \eqref{rhokconveinpee2}, \eqref{XkayTeeBoundedL2},  and the fact that $\omega$ grows at most linearly, 
\be\label{pointwiseEstimateXkay}
|X^k(y,t)|\le B(1+t)(1+|y|)
\ee
for some constant $B>0$ independent of $k\in \N$ and for each $y\in \R$ and $t\ge 0$. 

\par It follows from the Arzel\`a-Ascoli theorem that $(X^{k_j}(t))_{j\in \N}$ has a subsequence 
$(X^{k_{j_\ell}}(t))_{\ell\in \N}$ that converges locally uniformly on $\R$ to a uniformly continuous function $Y:\R\rightarrow \R$.
We also have by \eqref{weakLimXkayJay} that
$$
\int_{\R} Y\;\phi d\rho_0=\lim_{\ell\rightarrow\infty}\int_{\R}X^{k_{j_\ell}}(t)\;\phi d\rho^{k_{j_\ell}}_0=\int_{\R} X(t)\;\phi d\rho_0
$$
for $\phi\in C_b(\R)$. That is, $X(t)=Y$ $\rho_0$ almost everywhere.  And for any another subsequence of $(X^{k_j}(t))_{j\in \N}$ which 
converges locally uniformly to a continuous function $Z$, it must be that $Y=Z$ $\rho_0$ almost everywhere. 

\par If $Y(x_0)>Z(x_0)$ for 
some $x_0\in\text{supp}(\rho_0)$, then continuity ensures $Y>Z$ in some neighborhood $(x_0-\delta,x_0+\delta)$ of $x_0$. 
This leads to a contradiction 
$$
0=\int_{\R}|Y-Z|d\rho_0\ge \int_{(x_0-\delta,x_0+\delta)}(Y-Z)d\rho_0>0,
$$
since $\rho_0((x_0-\delta,x_0+\delta))>0$. It follows that $Z=Y$ on the support of $\rho_0$, and these limiting values are uniquely determined on the support of $\rho_0$. 

\par  Without any loss of generality, we will redefine $X(t)=Y$ as these functions agree $\rho_0$ almost everywhere and now note
\be\label{XkayjayConvUnifYes}
\begin{cases}
X^{k_j}(y^j,t)\rightarrow X(y,t)\\
\text{whenever $y\in\text{supp}(\rho_0)$ and $y^j\rightarrow y$}.
\end{cases}
\ee
Moreover, in view of the bound \eqref{pointwiseEstimateXkay}, 
we can also apply Lemma \ref{LemmaVariantNarrowCon} to get 
$$
\lim_{\ell\rightarrow\infty}\int_{\R}X^{k_{j_\ell}}(t)^2d\rho^{k_{j_\ell}}_0=\int_{\R}Y^2d\rho_0=\int_{\R}X(t)^2d\rho_0<\infty.
$$
As this limit is independent of the subsequence, we actually have 
$$
\lim_{j\rightarrow\infty}\int_{\R}X^{k_{j}}(t)^2d\rho^{k_{j}}_0=\int_{\R}X(t)^2d\rho_0.
$$
The limit \eqref{strongConvXkayJay} now follows as we have shown that $(x,y)\mapsto x^2+y^2$ is uniformly integrable with respect to $\sigma^{k_j}_t$ (see Remark 7.1.1 of \cite{AGS} for more on this technical point).

\par  \underline{Step 4: verifying $(i)$, $(ii)$ and $(iii)$}.  Let us now define the mapping $X:[0,\infty)\rightarrow L^2(\rho_0); t\mapsto X(t)$ and let $0\le s\le t$. By \eqref{XLipschitzEst} and the assumption that $v_0:\R\rightarrow \R$ is absolutely continuous and grows at most linearly,
\begin{align*}
\int_{\R}(X(t)-X(s))^2d\rho_0
&=\lim_{j\rightarrow\infty}\int_{\R}(X^{k_j}(t)-X^{k_j}(s))^2d\rho^{k_j}_0\\
&\le  (t-s)^2\lim_{j\rightarrow\infty}\int_{\R}v_0^2d\rho^{k_j}_0\\
&=  (t-s)^2\int_{\R}v_0^2d\rho_0.
\end{align*}
It follows that $X$ is Lipschitz continuous. 

\par Suppose $y,z\in\text{supp}(\rho_0)$ with $y<z$.  By Proposition 5.1.8 of \cite{AGS}, there are sequences $(y^j)_{j\in \N}$ and $(z^j)_{j\in \N}$ with $y^j,z^j\in\text{supp}(\rho_0^{k_j})$ such that $y^j\rightarrow y$ and $z^j\rightarrow z$. Without any loss of generality, we may suppose that $y^j<z^j$ for all $j\in \N$.  By part $(iv)$ of Proposition \ref{DiscreteSolnProp}, 
\be
0\le X^{k_j}(z^j,t)-X^{k_j}(y^j,t)\le z^j-y^j+t\int^{z^j}_{y^j}|v'_0(x)|dx
\ee
for $j\in \N$.  In view of \eqref{XkayjayConvUnifYes}, we can send $j\rightarrow\infty$ and conclude part $(i)$ of this theorem.  A similar argument combined with part $(v)$ of Proposition \ref{DiscreteSolnProp} can be used to prove part $(ii)$ of this theorem.  We leave the details to the reader. 

\par Let us finally verify part $(iii)$ of this theorem. To this end, we $0<s\le t$ and recall from part $(vi)$ of Proposition \ref{DiscreteSolnProp} that there is $f^{k_j}: \R\rightarrow \R$ which satisfies 
$$
|f^{k_j}_{t,s}(x)-f^{k_j}_{t,s}(y)|\le \frac{t}{s}|x-y|, \quad x,y\in \R
$$
and 
\be\label{MeasureabilityforlimitXkayjay}
X^{k_j}(y^j,t)=f^{k_j}_{t,s}(X^{k_j}(y^j,s))
\ee
for $y^j$ belonging to the support of $\rho^{k_j}_0$. Choose $y\in \text{supp}(\rho_0)$ and $y^j\rightarrow y$. By \eqref{XkayjayConvUnifYes}, $X^{k_j}(y^j,s)\rightarrow X(y,s)$ and 
$X^{k_j}(y^j,t)\rightarrow X(y,t)$. As
\begin{align*}
|f^{k_j}_{t,s}(x)|&\le |f^{k_j}_{t,s}(x)-f^{k_j}_{t,s}(X^{k_j}(y^j,s))|+|f^{k_j}_{t,s}(X^{k_j}(y^j,s))|\\
&\le \frac{t}{s}|x-X^{k_j}(y^j,s)|+|X^{k_j}(y^j,t)|,
\end{align*}
$f^{k_j}$ is locally uniformly bounded on $\R$. It follows that $f^{k_j}$ has a subsequence (which we will not relabel) which converges locally uniformly on $\R$ to a function $f$ which satisfies the same Lipschitz estimate. Sending $k_j\rightarrow\infty$ along an appropriate sequence in \eqref{MeasureabilityforlimitXkayjay} gives \eqref{MeasureabilityforlimitX}. 
\end{proof}
For the remainder of this subsection, we will denote $X$ as the mapping and $(X^{k_j})_{j\in \N}$ as the sequence obtained in the previous lemma. We note that as $X: [0,\infty)\rightarrow L^2(\rho_0)$ 
is Lipschitz continuous it is differentiable almost everywhere on $[0,\infty)$. 
\begin{cor}\label{dotufunctionXtee}
For almost every $t\ge0$, there is a Borel function $u: \R\rightarrow \R$ such that  
$$
\dot X(t)=u(X(t))
$$
$\rho_0$ almost everywhere. 
\end{cor}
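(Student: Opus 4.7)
The plan is to leverage part $(iii)$ of Lemma \ref{FirstCompactnessXkayJay}, which says that for $0<s\le t$ the map $X(\cdot,t)$ factors through $X(\cdot,s)$ via a Lipschitz function $f_{t,s}$ on $\textup{supp}(\rho_0)$. Taking $s=t$ and endpoint $t+h$ will express the forward difference quotient of $X$ at $t$ as a continuous function of $X(t)$; an $L^2(\rho_0)$-subsequential limit of these will then present $\dot X(t)$ itself as a Borel function of $X(t)$.

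First, since $X:[0,\infty)\rightarrow L^2(\rho_0)$ is Lipschitz continuous by Lemma \ref{FirstCompactnessXkayJay} and $L^2(\rho_0)$ is a Hilbert space, $X$ is strongly differentiable for almost every $t\ge 0$. I would fix such a $t>0$ where $\dot X(t)$ exists in $L^2(\rho_0)$. For $h>0$, part $(iii)$ of Lemma \ref{FirstCompactnessXkayJay} provides a Lipschitz map $f_{t+h,t}:\R\rightarrow\R$ satisfying $X(y,t+h)=f_{t+h,t}(X(y,t))$ for every $y\in\textup{supp}(\rho_0)$. Setting $g_h(x):=(f_{t+h,t}(x)-x)/h$ gives a continuous function on $\R$ with
$$
g_h(X(y,t))=\frac{X(y,t+h)-X(y,t)}{h}
$$
for $\rho_0$ almost every $y$. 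Since the right-hand side converges to $\dot X(t)$ in $L^2(\rho_0)$ as $h\rightarrow 0^+$, I would then extract a sequence $h_n\downarrow 0$ along which $g_{h_n}(X(\cdot,t))\rightarrow\dot X(t)$ pointwise $\rho_0$ almost everywhere, and define the Borel function $u(x):=\limsup_{n\rightarrow\infty}g_{h_n}(x)$, with the convention that $u$ is set to $0$ wherever the limit superior is not a real number. This $u$ satisfies $u(X(y,t))=\dot X(y,t)$ for $\rho_0$ almost every $y$, which is precisely the required identity.

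The only subtle point is the first one: recognizing that the transition function $f_{t+h,t}$ is exactly the device that writes the forward difference quotient at scale $h$ as a function of $X(t)$. After that, the rest is routine, since an $L^2(\rho_0)$-limit of Borel functions of $X(t)$ is automatically a Borel function of $X(t)$ up to a $\rho_0$-null set by passing to a pointwise almost-everywhere convergent subsequence and taking a limit superior.
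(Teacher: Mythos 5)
Your proof is correct and takes essentially the same route as the paper: both use the transition maps $f_{t+h,t}$ from part $(iii)$ of Lemma \ref{FirstCompactnessXkayJay} to express the forward difference quotients of $X$ at $t$ as Borel (indeed Lipschitz) functions of $X(t)$, and then pass to a $\rho_0$ almost everywhere pointwise limit along a subsequence $h_n\downarrow 0$. The only difference is the concluding measurability step, where you set $u=\limsup_{n\rightarrow\infty}g_{h_n}$ (with a harmless convention where the limit fails to exist), while the paper instead argues that $\dot X(t)$ is measurable with respect to the sigma-algebra generated by $X(t)$ and factors it through $X(t)$ via a Doob--Dynkin-type lemma; both finishes are valid.
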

\begin{proof}
Choose a time $t\ge 0$ for which 
$$
\dot X(t)=\lim_{n\rightarrow\infty}n\left(X(t+1/n)-X(t)\right)
$$
exists in $L^2(\rho_0)$. Without any loss of generality, we may assume this limit exists $\rho_0$ almost everywhere as it does for a subsequence.  
By part $(iii)$ of Lemma \ref{FirstCompactnessXkayJay}, 
\be\label{dotXexists}
\dot X(t)=\lim_{n\rightarrow\infty}u_n(X(t))
\ee
$\rho_0$ almost everywhere. Here
$$
u_n:=n\left(f_{t+1/n,t}-\id\right)
$$
is Borel measurable for each $n\in \N$. 

\par Let $S\subset \R$ be a Borel subset such that $\rho_0(S)=1$ and \eqref{dotXexists} holds at each point in $S$; such a subset can be found as detailed in Theorem 1.19 in \cite{Folland}. Let us also define 
the Borel sigma sub-algebra
$$
{\cal F}:=\left\{\{y\in S: X(y,t)\in A\} :  A\subset \R\; \text{Borel}\right\}.
$$
We note that ${\cal F}$ is the sigma algebra generated by the restriction of $X(t)$ to $S$, so a Borel function is ${\cal F}$ measurable if and only if it is a composition of a Borel function with $X(t)|_{S}$ (exercise 1.3.8 of \cite{MR2722836}).  Consequently, $\dot X(t)|_S$ is the pointwise limit of ${\cal F}$ measurable functions and therefore must be ${\cal F}$ measurable itself (Corollary 2.9 \cite{Folland}). As a result, there is some Borel $u: \R\rightarrow \R$ for which 
$$
\dot X(t)|_S=u\left( X(t)|_S\right).
$$
That is, $\dot X(t)=u(X(t))$ $\rho_0$ almost everywhere. 
\end{proof}
The final lemma needed for the proof of Theorem \ref{mainThm} is as follows. 
\begin{lem}\label{LemCondotXkaryJay}
Suppose $t\ge 0$ and $g\in C_b(\R)$.  Then
\be\label{WeakConvdotX1}
\lim_{j\rightarrow\infty}\int^t_0\int_{\R}\dot X^{k_j}(\tau)\;g(X^{k_j}(\tau)) d\rho^{k_j}_0d\tau=
\int^t_0\int_{\R}\dot X(\tau)\;g(X(\tau)) d\rho_0d\tau.
\ee
\end{lem}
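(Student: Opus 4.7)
My plan is to integrate the relation $\dot X\cdot g(X)=\frac{d}{d\tau}G(X)$, where $G$ is an antiderivative of $g$, so that the time integral telescopes into a boundary expression to which Lemma \ref{FirstCompactnessXkayJay} applies directly. I set
\[
G(x):=\int_0^x g(r)\,dr,\qquad x\in\R,
\]
which is Lipschitz with constant $\|g\|_\infty$ and satisfies $G(0)=0$, so $|G(x)|\le\|g\|_\infty|x|$.

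For each $k$, every trajectory $\tau\mapsto X^k(y,\tau)$ with $y\in\textup{supp}(\rho_0^k)$ is piecewise linear, so the classical chain rule gives $\frac{d}{d\tau}G(X^k(y,\tau))=g(X^k(y,\tau))\dot X^k(y,\tau)$ for all but finitely many $\tau$. Integrating in $\tau$ on $[0,t]$ and summing against the (discrete) measure $\rho_0^k$,
\[
\int_0^t\!\!\int_\R \dot X^k(\tau)\,g(X^k(\tau))\,d\rho_0^k\,d\tau
=\int_\R\bigl[G(X^k(t))-G(\id)\bigr]\,d\rho_0^k.
\]
I then apply Lemma \ref{FirstCompactnessXkayJay} with $h(x,y):=G(y)-G(x)$. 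Since $|h(x,y)|\le\|g\|_\infty(|x|+|y|)$, the required growth $\sup|h|/(1+x^2+y^2)<\infty$ holds, and so
\[
\lim_{j\to\infty}\int_\R\bigl[G(X^{k_j}(t))-G(\id)\bigr]\,d\rho_0^{k_j}
=\int_\R\bigl[G(X(t))-G(\id)\bigr]\,d\rho_0.
\]

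It remains to identify this limit with $\int_0^t\!\int_\R \dot X(\tau)\,g(X(\tau))\,d\rho_0\,d\tau$. Since $X\colon[0,\infty)\to L^2(\rho_0)$ is Lipschitz (Lemma \ref{FirstCompactnessXkayJay}) and $G$ is Lipschitz, the scalar map $t\mapsto\int_\R G(X(t))\,d\rho_0$ is Lipschitz and hence absolutely continuous. At a.e.\ $t$ where $(X(t+h)-X(t))/h\to\dot X(t)$ in $L^2(\rho_0)$, I use the finite-increment identity
\[
G(X(t+h))-G(X(t))=\bigl(X(t+h)-X(t)\bigr)\int_0^1 g\bigl(X(t)+s(X(t+h)-X(t))\bigr)\,ds,
\]
extract a subsequence $h_n\to 0$ along which $X(t+h_n)\to X(t)$ $\rho_0$-a.e., and invoke dominated convergence (the integral factor is uniformly bounded by $\|g\|_\infty$) together with Cauchy--Schwarz to upgrade to $L^1(\rho_0)$ convergence of the difference quotient to $g(X(t))\dot X(t)$. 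As every subsequence admits such a further subsequence, the full limit exists, giving
\[
\frac{d}{dt}\int_\R G(X(t))\,d\rho_0=\int_\R g(X(t))\dot X(t)\,d\rho_0
\]
for a.e.\ $t$. Integrating in time completes the proof. The only delicate step is this continuum chain rule: because $X$ is merely a Lipschitz curve into $L^2(\rho_0)$ and not classically differentiable in $\tau$ pointwise in $y$, the subsequence plus dominated-convergence argument is essential; every other step is routine bookkeeping once the antiderivative $G$ has been introduced.
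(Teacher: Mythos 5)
Your proof is correct and follows essentially the same route as the paper: introduce the antiderivative of $g$, collapse the time integral for the piecewise linear $X^{k_j}$ via the fundamental theorem of calculus, and pass to the limit using the linear growth of the antiderivative together with Lemma \ref{FirstCompactnessXkayJay}. The only difference is that you justify the chain rule $\frac{d}{dt}\int_{\R}G(X(t))\,d\rho_0=\int_{\R}g(X(t))\dot X(t)\,d\rho_0$ for the limiting Lipschitz curve in detail (the paper asserts this step tersely), which is a welcome addition rather than a deviation.
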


\begin{proof}
Set 
\be\label{antiderivativeGee}
F(z)=\int^z_0 g(y)dy
\ee
for $z\in \R$ and observe that $F$ is continuously differentiable and Lipschitz continuous.  Moreover, 
\begin{align*}
\int^t_0\int_{\R}\dot X^{k_j}(\tau)\;g(X^{k_j}(\tau))  d\rho^{k_j}_0d\tau&=\int_{\R}\left(\int^t_0\dot X^{k_j}(\tau)\;g(X^{k_j}(\tau)) d\tau \right)  d\rho^{k_j}_0\\
&=\int_{\R}\left(\int^t_0\frac{d}{d\tau}F(X^{k_j}(\tau)) d\tau \right)  d\rho^{k_j}_0\\
&=\int_{\R}\left(F(X^{k_j}(t))-F(\id) \right) d\rho^{k_j}_0.
\end{align*}
Since $F$ grows at most linearly, we can appeal to Lemma \ref{FirstCompactnessXkayJay} and send $j\rightarrow\infty$ to find  
\begin{align*}
\lim_{j\rightarrow\infty}\int^t_0\int_{\R}\dot X^{k_j}(\tau)\;g(X^{k_j}(\tau)) d\rho^{k_j}_0d\tau&=\int_{\R}\left(F(X(t))-F(\id) \right) d\rho_0\\
&=\int_{\R}\left(\int^t_0\frac{d}{d\tau}F(X(\tau))\right) d\rho_0\\
&=\int_{\R}\left(\int^t_0\dot X(\tau)\;g(X(\tau)) d\tau \right) d\rho_0\\
&=\int^t_0\int_{\R}\dot X(\tau)\;g(X(\tau)) d\rho_0d\tau.
\end{align*}
\end{proof}
\begin{proof}[Proof of Theorem \ref{mainThm}]
We will show $X:[0,\infty)\rightarrow L^2(\rho_0)$ is the desired solution. First note that Lemma \ref{FirstCompactnessXkayJay} implies 
$$
\int_{\R}X(0)\phi d\rho_0=\lim_{j\rightarrow\infty}\int_{\R}X^{k_j}(0)\phi d\rho^{k_j}_0=\lim_{j\rightarrow\infty}\int_{\R}\id\phi d\rho^{k_j}_0
=\int_{\R}\id\phi d\rho_0
$$
for each $\phi\in C_b(\R)$. It follows that $X$ satisfies the initial condition \eqref{FlowInit}.  It also follows from \eqref{strongConvXkayJay} that
$$
\lim_{j\rightarrow\infty}\int^t_0\int_{\R}v_0 g(X^{k_j}(\tau))d\rho^{k_j}_0d\tau=\int^t_0\int_{\R}v_0 g(X(\tau))d\rho_0d\tau
$$
for each $g\in C_b(\R)$ and $t\ge0$. Combining with Lemma \ref{LemCondotXkaryJay} gives 
\begin{align*}
\int^t_0\int_{\R}v_0 g(X(\tau))d\rho_0d\tau&=\lim_{j\rightarrow\infty}\int^t_0\int_{\R}v_0 g(X^{k_j}(\tau))d\rho^{k_j}_0d\tau\\
&=\lim_{j\rightarrow\infty}\int^t_0\int_{\R}\dot X^{k_j}(\tau)g(X^{k_j}(\tau))d\rho^{k_j}_0d\tau\\
&=\int^t_0\int_{\R}\dot X(\tau)g(X(\tau))d\rho_0d\tau.
\end{align*}

\par We may write
\be\label{IntegratedCondExp}
\int^t_0\int_{\R}\dot X(\tau)g(X(\tau))d\rho_0d\tau=\int_{\R}\left(F(X(t))-F(\id)\right)d\rho_0
\ee
using an antiderivative $F$ of $g$ as in \eqref{antiderivativeGee}.  Recall that $\dot X(t)$ exists for almost every $t\ge 0$. At any such $t$, we can differentiate \eqref{IntegratedCondExp} to find
$$
\int_{\R}\dot X(t)g(X(t))d\rho_0d\tau=\int_{\R}v_0 g(X(t))d\rho_0.
$$
By Corollary \eqref{dotufunctionXtee}, there is also a Borel function $u:\R\rightarrow \R$ such that
$$
\dot X(t)=u(X(t))
$$
for almost every $t\ge 0$. These observations imply that $X$ satisfies the sticky particle flow equation \eqref{FlowEqn}.

\par Part $(ii)$ and $(iii)$ of this theorem follows from parts $(i)$ and $(ii)$ of Lemma \ref{FirstCompactnessXkayJay}, respectively.    So all that we are left to show is part $(i)$. Fix two times $s,t\ge 0$ with $t\ge s$ such that 
$$
\dot X(s)=\E_{\rho_0}[v_0|X(s)]\quad \text{and}\quad \dot X(t)=\E_{\rho_0}[v_0|X(t)]
$$
$\rho_0$ almost everywhere. By part $(iii)$ of Lemma \ref{FirstCompactnessXkayJay} and the tower property of conditional expectation,
\eqref{TowerProp}
$$
\E_{\rho_0}[\dot X(s)|X(t)]= \E_{\rho_0}[\E_{\rho_0}[v_0|X(s)]|X(t)]=\E_{\rho_0}[v_0|X(t)]=\dot X(t).
$$
We then conclude
$$
\int_{\R}\frac{1}{2}\dot X(t)^2d\rho_0\le \int_{\R}\frac{1}{2}\dot X(s)^2d\rho_0\le \int_{\R}\frac{1}{2}v_0^2d\rho_0
$$
by appealing to \eqref{JensenINeq}. 
\end{proof}

\subsection{Generating a solution of the SPS}
This final subsection is dedicated to the Proof of Corollary \ref{oldThm}, which we will accomplish in three steps. 
\par 1. For each $t\ge 0$, set 
$$
\rho_t:=X(t)_{\#}\rho_0.
$$ 
As $X:[0,\infty)\rightarrow L^2(\rho_0)$ is continuous, $\rho: [0,\infty)\rightarrow {\cal P}(\R)$ is narrowly continuous.  Let us also define the Borel probability measure $\mu$ on $\R\times [0,\infty)$ 
$$
\mu(S):=\int^\infty_0\int_{\R} \chi_S(X(t),t)d\rho_0e^{-t}dt=\iint_Sd\rho_te^{-t}dt
$$
and the signed Borel measure $\pi$ on $\R\times [0,\infty)$  
$$
\pi(S):=\int^\infty_0\int_{\R}v_0\cdot \chi_S(X(t),t)d\rho_0e^{-t}dt
$$
for $S\subset \R\times [0,\infty)$. 

\par In view of H\"older's inequality, 
$$
|\pi(S)|\le \left(\int_{\R}|v_0|^2d\rho_0\right)^{1/2}\mu(S)^{1/2}.
$$
Therefore, $\pi$ is absolutely continuous with respect to $\mu$.  By the Radon-Nikodym theorem, there is a Borel $v:\R\times[0,\infty)\rightarrow \R$ such that 
\begin{align*}
\int^\infty_0\int_{\R}h(x,t)d\pi(x,t)&=\int^\infty_0\int_{\R}h(X(t),t)v_0d\rho_0e^{-t}dt\\
&=\int^\infty_0\int_{\R}h(x,t)v(x,t)d\mu(x,t)\\
&=\int^\infty_0\int_{\R}h(x,t)v(x,t)d\rho_t(x)e^{-t}dt\\
&=\int^\infty_0\int_{\R}h(X(t),t)v(X(t),t)d\rho_0e^{-t}dt.
\end{align*}
\par It follows that for Lebesgue almost every $t\ge 0$,
\be\label{visCondExp}
v(X(t),t)=\E_{\rho_0}[v_0|X(t)]=\dot X(t)
\ee
$\rho_0$ almost everywhere.  Also note
$$
\int_{\R}v(x,t)^2d\rho_t(x)=\int_{\R}v(X(t),t)^2d\rho_0=\int_{\R}\dot X(t)^2d\rho_0\le \int_{\R}v_0^2d\rho_0,
$$
for almost every $t\ge 0$. Therefore
$$
\int^T_0\int_{\R}v(x,t)^2d\rho_t(x)dt<\infty
$$
for each $T>0$. 

\par 2. Fix $\phi\in C^\infty_c(\R\times[0,\infty))$ and observe
\begin{align}
\int^\infty_0\int_{\R}(\partial_t\phi+v\partial_x\phi)d\rho_tdt
&=\int^\infty_0\int_{\R}(\partial_t\phi(X(t),t)+v(X(t),t)\partial_x\phi(X(t),t))d\rho_0dt\\
&=\int^\infty_0\int_{\R}(\partial_t\phi(X(t),t)+\dot X(t)\partial_x\phi(X(t),t))d\rho_0dt\\
&=\int^\infty_0\int_{\R}\frac{d}{dt}\phi(X(t),t)d\rho_0dt\\
&=\int_{\R}\int^\infty_0\frac{d}{dt}\phi(X(t),t)dt d\rho_0\\
&=-\int_{\R}\phi(X(0),0)d\rho_0\\
&=-\int_{\R}\phi(\cdot,0)d\rho_0.
\end{align}
We also have by \eqref{visCondExp},
\begin{align}
&\int^\infty_0\int_{\R}(v\partial_t\phi+v^2\partial_x\phi)d\rho_tdt\\
&\quad=\int^\infty_0\int_{\R}(\partial_t\phi(X(t),t)+v(X(t),t)\partial_x\phi(X(t),t))v(X(t),t)d\rho_0dt\\
&\quad=\int^\infty_0\int_{\R}(\partial_t\phi(X(t),t)+v(X(t),t)\partial_x\phi(X(t),t))v_0d\rho_0dt\\
&\quad=\int^\infty_0\int_{\R}(\partial_t\phi(X(t),t)+\dot X(t)\partial_x\phi(X(t),t))v_0d\rho_0dt\\
&\quad=\int^\infty_0\int_{\R}\frac{d}{dt}\phi(X(t),t)v_0d\rho_0dt\\
&\quad=\int_{\R}\left(\int^\infty_0\frac{d}{dt}\phi(X(t),t) dt\right)v_0d\rho_0\\
&\quad=-\int_{\R}\phi(X(0),0)v_0d\rho_0\\
&\quad=-\int_{\R}\phi(\cdot,0)v_0d\rho_0.
\end{align}
As a result, the pair $\rho$ and $v$ is a weak solution of the SPS \eqref{SPS} with initial conditions \eqref{Init}. 

\par 3. In view of \eqref{visCondExp} and $(i)$ of Theorem \ref{mainThm}  
\begin{align*}
\int_{\R}\frac{1}{2}v(x,t)^2d\rho_t(x)&=\int_{\R}\frac{1}{2}\dot X(t)^2d\rho_0\\
&\le \int_{\R}\frac{1}{2}\dot X(s)^2d\rho_0\\
&= \int_{\R}\frac{1}{2}v(x,s)^2d\rho_s(x)
\end{align*}
for almost every $0\le s\le t$.   Moreover, part $(iii)$ of Theorem \ref{mainThm} implies
\begin{align*}
0&\ge \frac{d}{dt}\frac{1}{t^2}(X(w,t)-X(z,t))^2\\
&=\frac{2}{t^2}(X(w,t)-X(z,t))(\partial_tX(w,t)-\partial_tX(z,t))-\frac{2}{t^3}(X(w,t)-X(z,t))^2\\
&=\frac{2}{t^2}\left[(X(w,t)-X(z,t))(v(X(w,t),t)-v(X(z,t),t))-\frac{1}{t}(X(w,t)-X(z,t))^2\right]
\end{align*}
for Lebesgue almost every $t>0$ and $w,z\in E$. Here $E\subset \R$ is $\rho_0$ measurable and $\rho_0(E)=1$. Without loss of generality, we may assume $E$ is a countable union of closed sets 
(part $c$ of Theorem 1.19 in \cite{Folland}). 

\par In particular, we have shown that \eqref{Entropy} holds for $x,y$ belonging to the forward image of $E$ under $X(t)$
$$
X(t)(E):=\{X(w,t)\in \R: w\in E\}.
$$
By part $(ii)$ of Theorem \ref{mainThm}, we may assume that $X(t): \R\rightarrow \R$ is continuous. It follows that $X(t)(E)$ is Borel measurable (see Proposition \ref{measProp}). Furthermore, 
$$
X(t)^{-1}\left[X(t)(E)\right]\supset E,
$$
so
$$
\rho_t(X(t)(E))=\rho_0(X(t)^{-1}\left[X(t)(E)\right])
\ge \rho_0(E)=1.
$$
Consequently,  \eqref{Entropy} holds on a Borel subset of full measure for $\rho_t$ and
we conclude part $(ii)$ of this corollary.

\appendix

\section{Measurability of a continuous image}
In this appendix, we will prove the following elementary assertion which was used in the proof of Corollary \ref{oldThm}.  
\begin{prop}\label{measProp}
Suppose $f: \R\rightarrow \R$ is continuous and $C=\bigcup_{i\in \N}C_i$ and each $C_i\subset\R$ is closed. Then $f(C)$ is Borel measurable. 
\end{prop}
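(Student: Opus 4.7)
The plan is to exploit the fact that the image of a compact set under a continuous map is compact, together with the $\sigma$-compactness of $\R$, to represent $f(C)$ as a countable union of compact sets.

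First I would note that, since images commute with unions,
\[
f(C) \;=\; f\!\left(\bigcup_{i\in\N}C_i\right) \;=\; \bigcup_{i\in\N} f(C_i),
\]
so by countable stability of the Borel $\sigma$-algebra it suffices to prove that $f(C_i)$ is Borel for each fixed $i$. Since each $C_i$ is closed in $\R$, I would further decompose
\[
C_i \;=\; \bigcup_{n\in\N}\bigl(C_i \cap [-n,n]\bigr),
\]
and set $K_{i,n} := C_i \cap [-n,n]$. Each $K_{i,n}$ is closed and bounded, hence compact.

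Next, since $f$ is continuous, the image $f(K_{i,n})$ of the compact set $K_{i,n}$ is a compact subset of $\R$, and in particular closed and therefore Borel measurable. Then
\[
f(C_i) \;=\; f\!\left(\bigcup_{n\in\N} K_{i,n}\right) \;=\; \bigcup_{n\in\N} f(K_{i,n})
\]
is a countable union of closed sets, i.e.\ an $F_\sigma$ set, hence Borel. Combining with the first display gives
\[
f(C) \;=\; \bigcup_{i\in\N}\bigcup_{n\in\N} f(K_{i,n}),
\]
which is again a countable union of compact (hence Borel) sets, and so is Borel measurable.

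There is no substantive obstacle here: the only thing one has to be careful about is that continuity alone does not send arbitrary closed sets to closed sets (closedness is not preserved by continuous images in general), which is precisely why the intermediate step of intersecting with $[-n,n]$ to obtain compactness is needed. Once $\sigma$-compactness of $\R$ is invoked, the argument reduces to the standard fact that continuous images of compact sets are compact.
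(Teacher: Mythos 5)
Your argument is correct and follows essentially the same route as the paper's proof: both reduce each closed $C_i$ to a countable union of compact pieces (you intersect with $[-n,n]$, the paper with $[k,k+1]$), use that continuous images of compact sets are compact, and conclude via countable unions. No further comment is needed.
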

\begin{proof}
For each $i\in \N$, we may write 
$$
C_i=\R\cap C_i=\left(\bigcup_{k\in \Z}[k,k+1]\right)\cap C_i=\bigcup_{k\in \Z}\left([k,k+1]\cap C_i\right).
$$
As the forward image distributes over unions,
$$
f(C_i)=\bigcup_{k\in \Z}f([k,k+1]\cap C_i).
$$
Since $[k,k+1]\cap C_i$ is compact and $f$ is continuous, $f([k,k+1]\cap C_i)$ is compact. As a result, $f(C_i)$ is a countable union of compact subsets of $\R$ and is thus Borel measurable. Hence, 
$$
f(C)=\bigcup_{i\in \N}f(C_i)
$$
is also Borel.
\end{proof}

\bibliography{SPbib}{}

\bibliographystyle{plain}

\typeout{get arXiv to do 4 passes: Label(s) may have changed. Rerun}

\end{document}